\NewDocumentCommand{\sump}{e{_}}
 {%
  \DOTSB
  \mathop{\IfNoValueTF{#1}{\sump@{}}{\sump@{#1}}}%
  \nolimits
 }
\newcommand{\sump@}[1]{\mathpalette\sump@@{#1}}
\newcommand{\sump@@}[2]{%
  \ifx#1\displaystyle
    {\sump@display{#2}}%
  \else
    \sum@\nolimits^*_{#2}%
  \fi
}
\newcommand{\sump@display}[1]{%
  \sbox\z@{$\m@th\displaystyle\sum@\nolimits'$}%
  \sbox\tw@{$\m@th\displaystyle\sum@\limits_{#1}$}%
  \sbox\@tempboxa{$\m@th\displaystyle'$}
  \mathop{\sum@\nolimits^* \kern-\wd\@tempboxa}\limits_{#1}%
  \ifdim\wd\z@>\wd\tw@
    \kern\dimexpr\wd\z@-\wd\tw@\relax
  \fi
}
\theoremstyle{plain}
\newtheorem{theorem}{Theorem}
\newtheorem{lemma}[theorem]{Lemma}
\newtheorem{prop}[theorem]{Proposition}
\theoremstyle{definition}
\newtheorem*{Acknowledgements}{Acknowledgements}
\newtheorem*{remark}{Remark}
\newtheorem*{notation}{Notation}
\renewcommand{\geq}{\geqslant}
\renewcommand{\leq}{\leqslant}
\renewcommand{\mod}{\mathrm{mod}\,}
\newcommand{\res}{\mathrm{res}\,}
\newcommand{\pri}{\mathfrak{p}}
\newcommand{\Pri}{\mathfrak{P}}
\newcommand{\RR}{\mathbb{R}}
\newcommand{\CC}{\mathbb{C}}
\newcommand{\QQ}{\mathbb{Q}}
\newcommand{\ZZ}{\mathbb{Z}}
\newcommand{\NN}{\mathbb{N}}
\numberwithin{equation}{section}
\def\Ddots{\mathinner{\mkern1mu\raise\p@
\vbox{\kern7\p@\hbox{.}}\mkern2mu
\raise4\p@\hbox{.}\mkern2mu\raise7\p@\hbox{.}\mkern1mu}}
\DeclareRobustCommand\widecheck[1]{{\mathpalette\@widecheck{#1}}}
\def\@widecheck#1#2{%
    \setbox\z@\hbox{\m@th$#1#2$}%
    \setbox\tw@\hbox{\m@th$#1%
       \widehat{%
          \vrule\@width\z@\@height\ht\z@
          \vrule\@height\z@\@width\wd\z@}$}%
    \dp\tw@-\ht\z@
    \@tempdima\ht\z@ \advance\@tempdima2\ht\tw@ \divide\@tempdima\thr@@
    \setbox\tw@\hbox{%
       \raise\@tempdima\hbox{\scalebox{1}[-1]{\lower\@tempdima\box
\tw@}}}%
    {\ooalign{\box\tw@ \cr \box\z@}}}
\begin{document} 

\title{Density of the union of positive diagonal binary quadratic forms}

\author{Yijie Diao}

\address{Institute of Science and Technology Austria, Am Campus 1, 3400 Klosterneuburg, Austria}
\email{s6yidiao@gmail.com}

\date{\today}

\begin{abstract} 
Let $X$ be a sufficiently large positive integer. We prove that one may choose a subset $S$ of primes with cardinality $O(\log X)$, such that a positive proportion of integers less than $X$ can be represented by $x^2 + py^2$ for at least one of $p \in S$. 
\end{abstract}

\subjclass[2020]{11E16, 11N56}
\keywords{diagonal binary quadratic forms, genus characters, sum over primes}

\setcounter{tocdepth}{2}  \maketitle

\section{Introduction}\label{intro}

A celebrated theorem of Landau \cite{La} states that
\begin{equation*}
	\# \{ n \leq X: n = x^2 + y^2, \ x,y \in \mathbb{Z} \} \sim \kappa X (\log X)^{-1/2},
\end{equation*} 
where $\kappa$ is the Landau-Ramanujan constant. It implies that the density of the integers which can be represented by the sum of two squares is zero.
Bernays \cite{Be} generalized Landau's result by showing that
\begin{equation}\label{Bernays}
	\# \{ n \leq X: n = x^2 + z y^2, \ x,y \in \mathbb{Z} \} \sim \kappa_z X (\log X)^{-1/2},
\end{equation}
for any positive integer $z$.

In this paper, we consider the set
\begin{equation*}
	N(X, Z) = \ \# \bigcup_{1 \leq z \leq Z} \{ n \leq X : n = x^2 + z y^2 \}.
\end{equation*} 
It is natural to ask when $Z$ is sufficiently large, with respect to $X$, such  that $N(X, Z)$ takes a positive proportion of $X$.
According to (\ref{Bernays}), we require  $Z \gg (\log X)^{1/2}$ at least. 
In an interesting and difficult recent paper, Hanson and Vaughan \cite{HV} showed that $Z \gg  \log X ( \log\log X )$ is sufficient.
More precisely, they chose a subset $S'(Z) = \{z \leq Z: z \text{ is odd and square-free}\}$ and proved for $Z = \log X (\log \log X)$ that
\begin{equation*}
	\# \bigcup_{z \in S'(Z)} \{ n \leq X : n = x^2 + z y^2 \} \gg X.
\end{equation*}

In this paper, we will use a very different approach from \cite{HV} to give a simpler proof of a stronger result. 
Note that $\#S'(Z) \asymp Z = \log X ( \log\log X )$. 
We will show that one can choose a strictly smaller subset with size $O(\log X)$. 

For a subset $\Omega \subset \NN$, we denote
\begin{equation*}
	S_\Omega(Z) = \{ z \leq Z : z \in \Omega \},
\end{equation*}
and
\begin{equation*}
	N_\Omega(X, Z) = \ \# \bigcup_{z \in S_\Omega(Z)} \{ n \leq X : n = x^2 + z y^2 \}.
\end{equation*} 
Let $\mathcal{P}$ be the set of all primes congruent to 1 modulo 4 and let $\mathcal{Q}$ be a subset of $\mathcal{P}$ satisfying
\begin{equation*}
	\# S_\mathcal{Q}(Z) \asymp Z / \log Z,
\end{equation*}
for $Z$ sufficiently large. 
Our primary result is the following theorem.
\begin{theorem}\label{maintheorem}
	For $Z =  \log X ( \log\log X )$ and $\mathcal{Q}$ as above, we have
	$$N_\mathcal{Q}(X, Z) \gg X.$$
\end{theorem}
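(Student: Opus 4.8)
The plan is to run a second-moment (Cauchy--Schwarz) argument on a representation-counting function. Write
\[
\mathcal U \;=\; \bigcup_{p\in S_Q(Z)}\{\,n\le X: n=x^2+py^2\,\},\qquad N_Q(X,Z)=\#\mathcal U,
\]
and for $n\le X$ set $\nu(n)=\#\{\,p\in S_Q(Z): n=x^2+py^2 \text{ for some } x,y\in\ZZ\,\}$, the number of admissible primes representing $n$. Since $\nu$ is supported on $\mathcal U$, Cauchy--Schwarz gives
\[
N_Q(X,Z)\;\geq\;\frac{\big(\sum_{n\le X}\nu(n)\big)^2}{\sum_{n\le X}\nu(n)^2}.
\]
It therefore suffices to bound the first moment from below and the second moment from above, and the entire game is to show that these two quantities have the same order of magnitude.

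For the first moment, $\sum_{n\le X}\nu(n)=\sum_{p\in S_Q(Z)}R_p(X)$ with $R_p(X)=\#\{n\le X: n=x^2+py^2\}$. By Bernays' asymptotic (\ref{Bernays}) one has $R_p(X)\asymp X(\log X)^{-1/2}$ for each fixed $p$, and since $\#S_Q(Z)\asymp Z/\log Z\asymp\log X$ this should give $\sum_n\nu(n)\asymp X(\log X)^{1/2}$. The first technical point is that $p$ ranges up to $Z=\log X(\log\log X)$, which grows with $X$, so one needs (\ref{Bernays}) in a form uniform in $p$ for $p\le Z$. I would establish the required uniform lower bound by the Landau--Selberg--Delange method, tracking the dependence of the implied constant on the discriminant $-4p$ and using that the density of primes inert in $\QQ(\sqrt{-p})$ equals $1/2$ independently of $p$.

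The crux is the second moment. Expanding,
\[
\sum_{n\le X}\nu(n)^2=\sum_{p,q\in S_Q(Z)}R_{p,q}(X),\qquad R_{p,q}(X)=\#\{\,n\le X: n=x^2+py^2=u^2+qv^2\,\}.
\]
The diagonal $p=q$ contributes $\sum_p R_p(X)\asymp X(\log X)^{1/2}$, which is harmless. The off-diagonal terms count integers simultaneously represented by two distinct principal forms; by genus theory these are controlled by the splitting of primes in the biquadratic field $\QQ(\sqrt{-p},\sqrt{-q})$ and, after invoking the genus characters of $-4p$ and $-4q$, by the quadratic character $\chi_{pq}$ of the real field $\QQ(\sqrt{pq})$. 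The plan is to compute $R_{p,q}(X)$ through the genus-character decomposition of the representation condition, isolating the principal form in each order and writing the count as a sum of contributions indexed by pairs of genus characters.

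The main obstacle is exactly this off-diagonal estimate. If one treats ``represented by $x^2+py^2$'' crudely as the condition of being a norm from $\QQ(\sqrt{-p})$, the two constraints are \emph{positively correlated}: the primes inert in both fields have density $1/4$, so the resulting bound is only $R_{p,q}(X)\asymp X(\log X)^{-3/4}$, and summing over the $\asymp(\log X)^2$ pairs overshoots the target by a factor $(\log X)^{1/4}$. Closing this gap is the heart of the matter: one must use the principal-form condition honestly rather than the weaker norm condition, and recover the missing $(\log X)^{1/4}$ either by restricting the count to a well-chosen subfamily of integers (for instance, primitive representations with suitable coprimality) that decorrelates the two forms, or by extracting genuine cancellation from the genus-character contributions attached to $\chi_{pq}$ when one averages over the pairs $p\ne q\in S_Q(Z)$. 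I expect this step --- forcing the off-diagonal down to $\ll\big(\sum_n\nu(n)\big)^2/X$ \emph{uniformly} across the growing family of discriminants --- to be where all the difficulty lies; once it is in place, Cauchy--Schwarz yields $N_Q(X,Z)\gg X$.
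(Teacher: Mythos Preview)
Your Cauchy--Schwarz framework is the same as the paper's, but the choice of detector is different, and that difference is exactly where your acknowledged gap comes from. You take the indicator $\nu(n)=\sum_{p\in S_Q(Z)}\mathbf 1[r(n,p)>0]$; the paper instead takes the weighted representation count
\[
R_Q(n,Z)=\sum_{p\in S_Q(Z)}p^{1/2}\,r(n,p),
\]
with $r(n,p)$ the full number of representations (divided by units). Two things are bought by this choice. First, the first moment becomes an elementary lattice-point count, $\sum_{n\le X}r(n,p)=\pi p^{-1/2}X+O(X^{1/2})$, so no uniform Bernays is needed and one gets $\sum_{n\le X}R_Q(n,Z)\asymp X\,Z/\log Z$ directly. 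Second, the off-diagonal second moment is $\sum_{p_1\ne p_2}(p_1p_2)^{1/2}\sum_{n\le X}r(n,p_1)r(n,p_2)$; the inner sum is evaluated asymptotically by studying the Dirichlet series $\sum_n r(n,p_1)r(n,p_2)n^{-s}$, which the paper expands over class-group characters and compares, Euler factor by Euler factor, to a class-group $L$-function on the biquadratic field $\QQ(\sqrt{-p_1},\sqrt{-p_2})$. The residue at $s=1$ (only the genus characters contribute) gives a main term proportional to $X(p_1p_2)^{-1/2}L(1,\chi_{p_1^*}\chi_{p_2^*})$, and after the weight $(p_1p_2)^{1/2}$ cancels the $(p_1p_2)^{-1/2}$ one is left with proving $\sum_{p_1\ne p_2\in S_P(Z)}L(1,\chi_{p_1p_2})\ll (Z/\log Z)^2$. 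That average is then handled by an approximate functional equation followed by Siegel--Walfisz for small moduli, an estimate of Elliott for intermediate moduli, and Heath-Brown's quadratic large sieve for large moduli.

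The $(\log X)^{1/4}$ obstacle you flag is genuine for the indicator, and neither of your proposed fixes closes it. The counts $R_{p,q}(X)$ are nonnegative, so there is no cancellation to extract when summing over pairs $(p,q)$; and ``using the principal-form condition honestly'' does not improve the Landau--Selberg--Delange exponent $3/4$, it only shrinks the leading constant, since the set of primes with an unconstrained exponent is still the set split in both fields. What actually rebalances the two moments is the weight $p^{1/2}$: it inflates the first moment to order $X\log X$ while arranging that the off-diagonal main term depends on $p_1,p_2$ only through $L(1,\chi_{p_1p_2})$, a quantity that is bounded on average over the pair. Your plan reaches the right biquadratic/genus-character picture but stops short of the analytic input (Perron plus convexity for the biquadratic $L$-function, then the $L(1,\chi)$ average) that turns it into a proof.
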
 
The implicit constant only depends on the subset $\mathcal{Q} \subset \mathcal{P}$. Note that with the choice of $Z$, we have $\# S_\mathcal{Q}(Z) \asymp \log X$.

\subsection{Preliminaries}
Following the notation of \cite{BG} and \cite{HV}, we call two solutions $(x_1, y_1)$ and $(x_2, y_2)$ with $x_1^2 + zy_1^2 = x_2^2 + zy_2^2$ equivalent if there is an automorphism $A \in \mathrm{SL}_2(\ZZ)$ such that $(x_1, y_1) = (x_2, y_2) A$. 
By considering the field norm, we know that the number of automorphisms $g_z$ equals the number of units in the ring of integers of $\QQ(\sqrt{-z})$.\footnote{ If we restrict $z$ to be square-free and $z \equiv 1 \, (\mod 4)$, then $g_z$ is always equal to $2$. } 
We denote
\begin{equation*}
	r(n, z) = \# \{ (x,y) \in \ZZ^2: x^2 + zy^2 = n \} / g_z,
\end{equation*}	 
and
\begin{equation*}
	R_\mathcal{Q}(n, Z) = \sum_{p \in S_\mathcal{Q}(Z)} p^{1/2} \, r(n, p).
\end{equation*}

\vspace{0.25em}
The Cauchy-Schwarz inequality shows that
\begin{equation}\label{C-S}
	N_\mathcal{Q}(X, Z) \cdot \sum_{n \leq X} R_\mathcal{Q}(n, Z)^2 \geq \Big( \sum_{n \leq X} R_\mathcal{Q}(n, Z) \Big)^2.
\end{equation}
For the right hand side, according to \cite[Lemma 3.1]{BG} we have uniformly
\begin{equation*}
	\sum_{n \leq X} r(n, z) = \pi z^{-1/2} X + O(X^{1/2}).
\end{equation*}
Then for $Z \ll X^{1/10}$, we have
\begin{equation*}
	\sum_{n \leq X} R_\mathcal{Q}(n, Z) = \sum_{p \in S_\mathcal{Q}(Z)} \Big( p^{1/2} \cdot \sum_{n \leq X} r(n, p) \Big)
	\asymp X (Z / \log Z) .
\end{equation*} 
Hence by (\ref{C-S}), we need to find when $Z$ is sufficiently large, such that
\begin{equation}\label{R2}
	\sum_{n \leq X} R_\mathcal{Q}(n, Z)^2 \ll X (Z / \log Z)^2.
\end{equation}

For the estimate of $\sum_{n} R_\mathcal{Q}(n,Z)^2$, we treat the diagonal part and the off-diagonal part separately, namely
\begin{equation*}
	R_\mathcal{Q}(n, Z)^2 =  \sum_{p \in S_\mathcal{Q}(Z)} p \, r(n, p)^2 
	+ \sum_{\substack{p_1 \neq p_2 \\ p_1, \, p_2 \in S_\mathcal{Q}(Z)}} (p_1 p_2)^{1/2} r(n, p_1) r(n, p_2).
\end{equation*}
By \cite[Theorem 2]{BG} we have uniformly
\begin{equation*}
	\sum_{n \leq X} r(n, z)^2 = 2\pi z^{-1/2} X + E_z(X),
\end{equation*}
where the error term $E_z(X)$ satisfies the following bound:
\begin{equation*}
	E_z(X) \ll X^{1/2} + \tau(z) \big( X \log X \cdot z^{-1} + X \cdot z^{-3/4} \big).
\end{equation*}
Hence for $Z \ll X^{1/10}$ and $p \in S_\mathcal{Q}(Z)$, we have

\begin{equation*}
	\sum_{n \leq X} r(n, p)^2 \ll X (p^{-1/2} + \log X \cdot p^{-1} ).
\end{equation*}
We have the following estimate for the diagonal part:
\begin{equation}\label{diag}
    \sum_{n \leq X} \sum_{p \in S_\mathcal{Q}(Z)} p \, r(n, p)^2
	   \ll X \cdot (Z / \log Z) \cdot (Z^{1/2} + \log X).
\end{equation}

Therefore, in order to obtain (\ref{R2}), we will choose
$$ Z \gg \log X (\log \log X).$$
This reveals the limitation of the Cauchy-Schwarz inequality (\ref{C-S}). 

\vspace{0.1em}
By positivity, we may and will from now on only consider the full set $\mathcal{P}$.
It is sufficient to prove Theorem \ref{maintheorem} by showing the following proposition.
\begin{prop}\label{positive}
For $Z = \log X (\log \log X)$ and $\mathcal{P} = \{p \text{ prime}: p \equiv 1 \, (\mod 4)\}$, we have
	\begin{equation*}
	\sum_{n \leq X} \sum_{\substack{p_1 \neq p_2 \\ p_1, p_2 \in S_\mathcal{P}(Z)}} (p_1 p_2)^{1/2} r(n, p_1) r(n, p_2) 
	\ll 
	X (Z / \log Z)^2.
	\end{equation*}
\end{prop}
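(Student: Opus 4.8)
The plan is to reduce Proposition \ref{positive} to an average over prime pairs of a correlation sum, to evaluate that sum asymptotically, and to win through cancellation in the resulting character sums. Set $W_{p_1,p_2}(X)=\sum_{n\leq X}r(n,p_1)r(n,p_2)$; the left-hand side of the proposition is $\sum_{p_1\neq p_2\in S_P(Z)}(p_1p_2)^{1/2}W_{p_1,p_2}(X)$, and since $\#S_P(Z)\asymp Z/\log Z$ it is enough to bound this by $X(\#S_P(Z))^2$. First I would record the arithmetic meaning of $r(n,p)$: since $p\equiv1\,(\mod 4)$ we have $\mathcal O_p=\ZZ[\sqrt{-p}]$ of discriminant $-4p$, so $r(n,p)$ counts principal ideals of norm $n$ in $\QQ(\sqrt{-p})$, whence $r(n,p)=h_p^{-1}\sum_{\chi}a_\chi(n)$, where $\chi$ runs over the characters of $\mathrm{Cl}(-4p)$, $h_p=\#\mathrm{Cl}(-4p)$, and $a_\chi(n)=\sum_{N\mathfrak a=n}\chi(\mathfrak a)$. (Equivalently, $4\,W_{p_1,p_2}(X)$ counts integer points on the quadric $x_1^2+p_1y_1^2=x_2^2+p_2y_2^2$ with common value $\leq X$.)

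Next I would extract the main term from the Dirichlet series $\sum_n r(n,p_1)r(n,p_2)n^{-s}=(h_{p_1}h_{p_2})^{-1}\sum_{\chi_1,\chi_2}\sum_n a_{\chi_1}(n)a_{\chi_2}(n)n^{-s}$, using that $\mathrm{Cl}(-4p_i)$ has exactly two genera: besides $\chi_0$ there is a single genus character $\psi_i$, and by Kronecker's factorization $L(s,\psi_i)=L(s,\chi_{-4})L(s,\chi_{p_i})$, so both $a_{\chi_0}$ and $a_{\psi_i}$ are coefficients of Eisenstein series (divisor sums twisted by Dirichlet characters), while every other character gives the Hecke eigenvalues of a weight-one cusp form with complex multiplication by $\QQ(\sqrt{-p_i})$. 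A pole at $s=1$ can arise only when the two objects share a constituent; since the CM fields are distinct the cuspidal forms never match, and among the Eisenstein pairs one checks that only $(\chi_0,\chi_0)$ and $(\psi_1,\psi_2)$ produce a pole, each with residue proportional to $L(1,\chi_{-4p_1})L(1,\chi_{-4p_2})L(1,\chi_{p_1p_2})$, where $\chi_{-4p_1}\chi_{-4p_2}=\chi_{p_1p_2}$. Applying the class number formula $L(1,\chi_{-4p_i})=\tfrac{\pi}{2}h_{p_i}p_i^{-1/2}$ cancels the factor $h_{p_1}h_{p_2}$ — this is precisely where the genus characters enter the main term — and I would obtain $W_{p_1,p_2}(X)\ll L(1,\chi_{p_1p_2})(p_1p_2)^{-1/2}X+\text{(error)}$, hence $(p_1p_2)^{1/2}W_{p_1,p_2}(X)\ll L(1,\chi_{p_1p_2})X+\text{(error)}$.

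The main obstacle is the error term, and the essential warning is that the $\log X$ barrier of the Cauchy--Schwarz bound (\ref{C-S}) must not be smuggled back in. A crude divisor-function bound for the quadric count amounts to replacing $\chi_{p_1p_2}(\ell)$ by $1$ in the local densities, which turns the convergent product $\prod_\ell(1+\chi_{p_1p_2}(\ell)\ell^{-1}+\cdots)\asymp L(1,\chi_{p_1p_2})$ into a divergent $\sum_\ell\ell^{-1}\sim\log X$, precisely one logarithm too many; so the character cancellation must be retained throughout. I would instead bound the cuspidal contributions by shifting the contour and invoking convexity for the relevant Rankin--Selberg $L$-functions, uniformly in the shift and in $p_1,p_2$; as the conductors are $\ll(p_1p_2)^{O(1)}$ and one divides by $h_{p_1}h_{p_2}$, this yields a per-pair error $\ll(p_1p_2)^{O(1)}X^{1/2+\eps}$. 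Since $p_i\leq Z\ll(\log X)^2$, the polynomial factors are harmless, and after summing over the $\ll(Z/\log Z)^2$ pairs the total error is $o\big(X(\#S_P(Z))^2\big)$. Making these bounds uniform over all the class-group characters simultaneously is the technical heart of the argument.

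Finally I would sum the main term over prime pairs, which is where the cancellation is harvested: it remains to show $\sum_{p_1\neq p_2\in S_P(Z)}L(1,\chi_{p_1p_2})\ll(\#S_P(Z))^2$. Expanding $L(1,\chi_{p_1p_2})=\sum_{n\geq1}\chi_{p_1p_2}(n)/n$ and interchanging the order of summation, the inner prime sum factors (for odd $n$ coprime to $p_1p_2$, using $p_i\equiv1\,(\mod 4)$ and reciprocity) as $\sum_{p_1,p_2}\chi_{p_1p_2}(n)=\big(\sum_{p\in S_P(Z)}(\tfrac{n}{p})\big)^2$. For $n$ a perfect square this is $\asymp(\#S_P(Z))^2$, but the outer weight is then $\sum_{m\geq1}m^{-2}=\pi^2/6<\infty$; for non-square $n$ the sum over primes exhibits cancellation (Siegel--Walfisz, or the large sieve for real characters) and contributes negligibly. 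Hence $\sum_{p_1,p_2}L(1,\chi_{p_1p_2})\ll(\#S_P(Z))^2\asymp(Z/\log Z)^2$, and combining this with the per-pair bound and the error estimate completes the proof of Proposition \ref{positive}.
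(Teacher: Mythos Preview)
Your plan coincides with the paper's proof: class-group character expansion, genus characters supplying the pole (your identification of $(\chi_0,\chi_0)$ and $(\psi_1,\psi_2)$ as the only polar pairs is correct and corresponds exactly to $d\in\{1,-4\}$ in Proposition~\ref{partialsum}), the class number formula cancelling $h_{p_1}h_{p_2}$, uniform convexity on the degree-$4$ $L$-function for the error term (the paper compares the convolution to a class-group $L$-function on the biquadratic field $\QQ(\sqrt{-p_1},\sqrt{-p_2})$ and gets $O(X^{3/4+\eps}Z)$ rather than your $X^{1/2+\eps}$, but either suffices here), and reduction to $\sum_{p_1\neq p_2}L(1,\chi_{p_1p_2})\ll(Z/\log Z)^2$.

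The one place your sketch would fail as written is this last estimate. Saying ``Siegel--Walfisz, or the large sieve for real characters'' suggests either tool alone handles the non-square $n$; in fact neither does, and a third ingredient is needed. After truncating the $L$-series at $Y=Z\log Z$ (required to control the tail before you can split into ranges) and reducing to odd squarefree $n>1$, partial summation leaves $\sum_N N^{-2}\sum_{n\leq N}^{*}\big|\sum_{p\in S_P(Z)}(\tfrac{n}{p})\big|^2$. Siegel--Walfisz covers only $n\leq(\log Z)^B$. Heath-Brown's quadratic large sieve gives $\sum_{n\leq N}^{*}|\cdot|^2\ll Z^{2+\eps}$, and the $Z^{\eps}$ is fatal in the middle range $(\log Z)^B<N\leq Z^{\delta}$: it contributes $Z^{2+\eps}(\log Z)^{-B-1}$, and for any fixed $\eps>0$ this exceeds the target $Z^{2}(\log Z)^{-2}$. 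The paper bridges this gap with Elliott's estimate (see \cite[(6)]{HB}), namely $\sum_{q\leq Q}\big|\sum_{n\leq N}^{*}(\tfrac{n}{q})\big|^2\ll(Q+N^{2}\log N)N$, which is weaker in $N$ but carries no $\eps$-loss in $Q$; combined with Cauchy--Schwarz this yields $S_2\ll Z^{2}(\log Z)^{-1-B/2}$. With $B=2$ the three ranges combine to give the required $(Z/\log Z)^{2}$, and with this one extra input your argument is complete and matches the paper's.
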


\vspace{0em}
\subsection{General strategy}

For $p_1, p_2 \in S_\mathcal{P}(Z)$ and $p_1 \neq p_2$, we will study the Dirichlet series
\begin{equation}\label{Diri}
	\sum_{n = 1}^\infty r(n, p_1)r(n, p_2)  n^{-s},
\end{equation}
and we will then use Perron's formula to estimate the partial sum
\begin{equation}\label{sum1}
	\sum_{n \leq X} r(n, p_1)r(n, p_2).
\end{equation} 

A standard way to include the remaining $(p_1 p_2)^{1/2}$ factor is by partial summation. We will instead obtain the necessary estimates directly from the class number formula (see Proposition \ref{partialsum}). In order to apply the Perron's formula, we need to investigate analytic properties of the Dirichlet series (\ref{Diri}). A possible method is to compare it to other well-known Dirichlet series. We require the forms to be both diagonal and positive definite because there is a simple representation (\ref{preortho}) of $r(n, z_j)$ by the orthogonality relations. 

We will now establish the following proposition, which is slightly more general than what we need for Proposition \ref{positive}. 
We denote
$$\mathcal{W} = \{z \in \NN: z \equiv 1 \, (\mod 4) \text{ and square-free} \} \supset \mathcal{P}.$$
Let $\Delta$ denote the set of discriminants and $\Delta_0$ denote the set of fundamental discriminants. 
For $d \in \Delta$, let $\chi_d$ be the corresponding quadratic character, specifically $\chi_d(n) = (\frac{d}{n})$. 

\begin{prop}\label{partialsum}
Let $Z \ll X^{1/10}$. Suppose that $z_1, z_2 \in S_\mathcal{W}(Z)$ and $z_1 \neq z_2$. For $j = 1, 2$, we denote $z_j^* = -4z_j$. Then we have
\begin{eqnarray*}
\nonumber \sum_{n \leq X} r(n, z_1)r(n, z_2) & = & \frac{\pi^2 X}{\sqrt{z_1^* z_2^*}} \cdot 
    \frac{L(1, \chi_{z_1^*}\chi_{z_2^*})}{L(2, \chi_{z_1^*}\chi_{z_2^*})} \cdot
    \sum_{\substack{d \in \Delta_0 \\ d| (z_1^*, z_2^*)}} \prod_{p|d} \, \frac{1-p^{-1}}{1-\chi_{z_1^*/d}\chi_{z_2^*/d}(p)p^{-1}} \\
     & & \ \ \ + \ O (X^{3/4 + \epsilon} \cdot Z),
\end{eqnarray*}

\vspace{0.25em}
\noindent
where the implicit constant is independent to the choice of $z_1, z_2$.
\end{prop}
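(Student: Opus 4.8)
The plan is to determine the analytic continuation and rightmost pole of the Dirichlet series (\ref{Diri}) and then recover the partial sum (\ref{sum1}) by Perron's formula. Since $z_j\in S_W(Z)$ we have $z_j$ squarefree with $-z_j\equiv3\,(\mod 4)$, so $\mathcal O_j:=\ZZ[\sqrt{-z_j}]$ is the maximal order of $K_j:=\QQ(\sqrt{-z_j})$, of discriminant $z_j^*=-4z_j$, and $g_{z_j}$ is its number of units. Consequently $r(n,z_j)$ counts the integral ideals of norm $n$ in the principal class, and by orthogonality of the class-group characters $r(n,z_j)=h_j^{-1}\sum_{\psi}a_\psi(n)$, where $h_j=h(K_j)$ and $a_\psi(n)=\sum_{N\mathfrak a=n}\psi(\mathfrak a)$ is the multiplicative coefficient of the Hecke $L$-function $L(s,\psi)$. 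Therefore
\[
\sum_{n\ge1}r(n,z_1)r(n,z_2)n^{-s}=\frac{1}{h_1h_2}\sum_{\psi_1,\psi_2}\ \sum_{n\ge1}a_{\psi_1}(n)a_{\psi_2}(n)\,n^{-s},
\]
a finite combination of Rankin--Selberg convolutions of two weight-one CM forms.

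Next I would locate the poles. Any pair $(\psi_1,\psi_2)$ for which at least one $\theta_{\psi_j}$ is genuinely cuspidal yields a convolution that continues holomorphically past $\Re s=\tfrac34$ and has no pole at $s=1$; here it is essential that $z_1\neq z_2$, so $K_1\neq K_2$ and no self-duality can occur. The pole can only come from pairs of genus characters ($\psi_j^2=1$), for which $L(s,\psi_j)=L(s,\chi_{d_j'})L(s,\chi_{d_j''})$ with $z_j^*=d_j'd_j''$ a splitting into coprime fundamental discriminants. For such pairs the Ramanujan-type identity
\[
\sum_{n}(\chi_a*\chi_b)(n)\,(\chi_c*\chi_d)(n)\,n^{-s}=\frac{\prod_{\chi\in\{\chi_a,\chi_b\},\ \chi'\in\{\chi_c,\chi_d\}}L(s,\chi\chi')}{L(2s,\chi_a\chi_b\chi_c\chi_d)},
\]
valid up to explicit factors at ramified primes, shows that a simple pole at $s=1$ occurs exactly when a discriminant factor of $z_1^*$ matches one of $z_2^*$, i.e. for each common fundamental divisor $d\mid(z_1^*,z_2^*)$, the matched factor being $L(s,\chi_d^2)=\zeta(s)\prod_{p\mid d}(1-p^{-s})$.

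The third step is to assemble the residue. Grouping the contributing genus-character pairs by their matched common discriminant $d\mid(z_1^*,z_2^*)$, the factor $L(s,\chi_d^2)$ furnishes the pole with residue $\prod_{p\mid d}(1-p^{-1})$, while the elementary identity $L(s,\chi_{z_1^*/d}\chi_{z_2^*/d})=L(s,\chi_{z_1^*}\chi_{z_2^*})\prod_{p\mid d}(1-\chi_{z_1^*/d}\chi_{z_2^*/d}(p)p^{-s})^{-1}$ lets me pull out the common ratio $L(1,\chi_{z_1^*}\chi_{z_2^*})/L(2,\chi_{z_1^*}\chi_{z_2^*})$. Collapsing the remaining sums over $\psi_1,\psi_2$ by the class number formula $L(1,\chi_{z_j^*})=\pi h_j/\sqrt{|z_j^*|}$ (using $g_{z_j}=2$) converts the prefactor $(h_1h_2)^{-1}$ into $\pi^2/\sqrt{z_1^*z_2^*}$, and the leftover local corrections organize into $\sum_{d}\prod_{p\mid d}\frac{1-p^{-1}}{1-\chi_{z_1^*/d}\chi_{z_2^*/d}(p)p^{-1}}$, reproducing precisely the residue claimed in Proposition \ref{partialsum}.

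Finally I would apply Perron's formula to (\ref{Diri}) and move the contour to $\Re s=\tfrac34+\epsilon$, collecting $X\cdot\res_{s=1}$ as the main term. On this line every Dirichlet and Rankin--Selberg factor is holomorphic, and I would estimate it by the convexity bound uniformly in $z_1,z_2$; as the analytic conductors are $\ll z_1z_2\ll Z^2$ and there are $O(h_1h_2)$ summands, the shifted integral together with the truncation error combine to $O(X^{3/4+\epsilon}Z)$ with a constant independent of $z_1,z_2$. I expect this last step to be the principal obstacle: one must secure the continuation of every convolution to $\Re s>\tfrac34$ with polynomial-in-conductor and polynomial-in-$t$ bounds that are uniform over all pairs $(z_1,z_2)$ with $z_1,z_2\le Z$, so that summing $O(h_1h_2)$ terms and integrating still produces the clean error $O(X^{3/4+\epsilon}Z)$. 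A secondary difficulty is the exact treatment of the ramified Euler factors (at $2$ and at the primes dividing $(z_1,z_2)$), which must be tracked carefully to land on the precise $d$-sum rather than a variant.
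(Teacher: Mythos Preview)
Your strategy coincides with the paper's: decompose $r(n,z_j)$ by orthogonality over class-group characters, observe that only pairs of genus characters produce a pole at $s=1$, factor those via the Ramanujan--M\"obius identity into four Dirichlet $L$-functions over $L(2s,\chi_{z_1^*z_2^*})$, and read off the residue using the class number formula. The paper realises the Rankin--Selberg convolution you invoke as a class-group $L$-function on the biquadratic field $K=\QQ(\sqrt{z_1^*},\sqrt{z_2^*})$, which is the same object in different clothing; your identification of the polar set and your assembly of the residue are both correct.

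The gap is in the contour shift. Moving only to $\Re s=\tfrac34+\epsilon$ cannot produce the error $O(X^{3/4+\epsilon}Z)$. On that line the degree-$4$ convexity bound gives $\mathcal{A}(s,\chi_1,\chi_2)\ll((1+|t|)Z)^{1/2-2\epsilon+\delta}$, so the vertical integral is $\asymp X^{3/4+\epsilon}Z^{1/2}T^{1/2}$; balancing this against the Perron truncation error $X^{1+\epsilon}/T$ forces $T^{3/2}\asymp X^{1/4}Z^{-1/2}$ and yields only $O(X^{5/6+\epsilon}Z^{1/3})$, strictly weaker than the claimed bound in the full range $Z\ll X^{1/10}$. (Equivalently: to make both contributions $\le X^{3/4+\epsilon}Z$ one would need $X^{1/4}/Z\le T\le Z$, hence $Z\ge X^{1/8}$, contradicting the hypothesis.) The paper instead continues each $\mathcal{A}(s,\chi_1,\chi_2)$ to $\sigma>\tfrac12$---which your Rankin--Selberg viewpoint equally supports, since these $L$-functions are entire away from the pole---shifts to $\Re s=\tfrac12+\epsilon$, and takes $T=X^{1/4}$; there convexity gives $((1+|t|)Z)^{1-2\epsilon+\delta}$ and the vertical integral is $X^{1/2+\epsilon}Z\cdot T\asymp X^{3/4+\epsilon}Z$, matching the truncation error. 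Your ``secondary difficulty'' about ramified Euler factors is exactly the missing uniform bound on the ratio $G(s,\chi_1,\chi_2)=\mathcal{A}/L_K$: the paper handles it by an Euler-product comparison lemma showing $G\ll\exp(\omega(z_1^*z_2^*))\ll Z^{o(1)}$ for $\sigma\ge\tfrac12+\epsilon$, which is precisely what lets the contour be pushed that far with clean dependence on $Z$.
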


Let $\mathcal{C}_j$ be the class group of $\QQ(\sqrt{z_j^*})$ and $h_j$ be the class number; let $\chi_j \in \widehat{\mathcal{C}_j}$ be a character. We denote 
\begin{equation}\label{def-a}
	a_j(n, \chi_j) = \sum_{\mathcal{N}\mathfrak{a} = n} \chi_j(\mathfrak{a}).
\end{equation}
Note that for $\mathrm{Re(s)} > 1$, we have
\begin{equation}\label{cgL}
	L_{\QQ(\sqrt{z_j^*})}(s, \chi_j)
	=
	\sum_{n = 1}^\infty a_j(n, \chi_j)n^{-s},
\end{equation}
where $L_{\QQ(\sqrt{z_j^*})}(s, \chi_j)$ is the class group $L$-function.

By the orthogonality relations, we have 
\begin{equation}\label{preortho}
	r(n, z_j) 
	= \frac{1}{h_j} \sum_{\mathcal{N}\mathfrak{a} = n} \sum_{\chi_j \in \widehat{\mathcal{C}_j}} \chi_j(\mathfrak{a})
	= \frac{1}{h_j} \sum_{\chi_j \in \widehat{\mathcal{C}_j}} 
	a_j(n, \chi_j). 
\end{equation}
Hence we have
\begin{equation}\label{ortho}
	r(n, z_1) r(n, z_2) = \frac{1}{h_1 h_2} \sum_{\chi_1 \in \widehat{\mathcal{C}_1}} \sum_{\chi_2 \in \widehat{\mathcal{C}_2}}
	a_1(n, \chi_1) a_2(n, \chi_2). 
\end{equation}
This makes it possible for us to study the Dirichlet series (\ref{Diri}) by investigating the Dirichlet series
\begin{equation}\label{ADiri}
	\sum_{n = 1}^{\infty} a_1(n, \chi_1) a_2(n, \chi_2) n^{-s}.
\end{equation}

The function $a(n, \chi)$ is multiplicative (we may temporarily forget the index), so it is crucial to detect its values on prime powers. 
By direct computation, we obtain the following lemma.

\begin{lemma}\label{isr}
Let $z \equiv 1 \, (\mod 4)$ be positive and square-free. We denote $z^* = -4z$ and $A = \mathcal{O}_{\QQ(\sqrt{z^*})}$. 
\begin{itemize}
	\item If $\chi_{z^*}(p) = -1$, then $(p)_A$ is a prime ideal with norm $p^2$, and
        \begin{equation*}\label{inert}
	       a(p^k, \chi) = 
	      \frac{(-1)^k + 1}{2}.
        \end{equation*}
    
    \vspace{0.1em}    
    \item If $\chi_{z^*}(p) = 1$, then $(p)_A = \pri \overline{\pri}$, where $\pri$ and $\overline{\pri}$ are both primes ideals with norm $p$, and
        \begin{equation*}\label{splits}
	             a(p^k, \chi) = \sum_{j = 0}^k \chi(\pri)^j \chi(\overline{\pri})^{k-j}.
        \end{equation*}
        
   \item If $\chi_{z^*}(p) = 0$, then $(p)_A = \pri^2$, where $\pri$ is a prime ideal with norm $p$, and
         \begin{equation*}\label{ramify}
	         a(p^k, \chi) = \chi(\pri)^k.
         \end{equation*}
\end{itemize}
In all case, we have
$$|a(p^k, \chi)| \leq k+1.$$
\end{lemma}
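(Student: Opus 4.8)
The plan is to reduce everything to the factorization of the rational prime $p$ in the ring of integers $A = \mathcal{O}_{\QQ(\sqrt{z^*})}$, and then to exploit that $\chi$ is a homomorphism on the ideal class group. First I would record that since $z \equiv 1 \,(\mod 4)$ is positive and square-free, we have $-z \equiv 3 \,(\mod 4)$, so $A = \ZZ[\sqrt{-z}]$ and $z^* = -4z$ is exactly the fundamental discriminant of $\QQ(\sqrt{z^*})$. Consequently the Kronecker symbol $\chi_{z^*}(p) = \left(\frac{z^*}{p}\right)$ governs the splitting of $p$: the values $-1$, $+1$, $0$ correspond respectively to $p$ inert with $(p)_A$ prime of norm $p^2$, to $p$ split as $(p)_A = \pri\,\overline{\pri}$ with $\mathcal{N}\pri = \mathcal{N}\overline{\pri} = p$, and to $p$ ramified as $(p)_A = \pri^2$ with $\mathcal{N}\pri = p$. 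These are precisely the three factorizations asserted in the statement.

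Next I would enumerate the integral ideals of norm $p^k$ in each case and sum $\chi$ over them, using that $\chi$ is completely multiplicative on integral ideals (it factors through the class group) and takes the value $1$ on principal ideals. In the split case every ideal of norm $p^k$ has the form $\pri^{\,j}\,\overline{\pri}^{\,k-j}$ with $0 \leq j \leq k$, so multiplicativity yields $a(p^k, \chi) = \sum_{j=0}^k \chi(\pri)^j \chi(\overline{\pri})^{k-j}$. In the ramified case the only ideal of norm $p^k$ is $\pri^k$, whence $a(p^k, \chi) = \chi(\pri)^k$. The inert case is the only one requiring a small parity observation: since the unique prime above $p$ has norm $p^2$, an ideal of norm $p^k$ exists only when $k$ is even, in which case it must equal $(p)_A^{k/2} = (p^{k/2})_A$, a principal ideal, forcing $\chi$ to take the value $1$; hence $a(p^k,\chi)$ equals $1$ for even $k$ and $0$ for odd $k$, which is exactly $\frac{(-1)^k+1}{2}$.

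Finally, for the uniform bound I would use that $\chi$, being a character of the finite group $\widehat{\mathcal{C}_j}$, takes values on the unit circle, so every summand above has modulus $1$. The split case contributes $k+1$ such terms, giving $|a(p^k,\chi)| \leq k+1$, while the inert and ramified cases each contribute a single term of modulus at most $1$. I expect no serious obstacle here, as the whole lemma is a direct computation; the only points demanding genuine care are confirming that $z^*$ is a fundamental discriminant (so that $\chi_{z^*}(p)$ correctly predicts the splitting type, including at $p=2$, where $2 \mid z^*$ forces ramification) and the parity argument in the inert case.
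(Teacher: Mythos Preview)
Your proposal is correct and is exactly the direct computation the paper has in mind; the paper itself offers no further details beyond the phrase ``by direct computation.'' One tiny slip: $\chi$ is a character of the class group $\mathcal{C}_j$, not of its dual $\widehat{\mathcal{C}_j}$, but this does not affect the argument.
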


\begin{remark}
It is worth mentioning that recently there has been notable activity in investigating representation of integers by binary form, for example the work of \cite{LSX}, \cite{SX} and \cite{FW}. The latter is slightly similar to the problem studied in \cite{HV} and this paper.	
\end{remark}

\vspace{-0.5em}
\begin{notation}

For $s \in \CC$, we denote $\sigma = \mathrm{Re}(s)$ and $t = \mathrm{Im}(s)$.

For a positive square-free integer $z \equiv 1 \, (\mod 4)$, we will continue to write $z^* = - 4 z$.

For $c \in \RR$, we denote $\int_{(c)}$ to be the vertical complex integral $\int_{c - i\infty}^{c + i\infty}$, and $\int_{(c)_T}$ to be the truncated vertical complex integral $\int_{c - iT}^{c + iT}$.

We will continue use the symbol $\Delta$ to denote the set of discriminants and $\Delta_0$ to denote the set of fundamental discriminants. 
\end{notation}

\vspace{-0.5em}
\begin{Acknowledgements}
    This article is a version the author's master thesis at the University of Bonn.
    The author would like to thank his  advisor Valentin Blomer for introducing the problem, and giving generous feedback and encouragement along the way, especially during the global pandemic.
    The author thanks Edgar Assing for his lectures on analytic number theory.
    Finally, the author is grateful to the anonymous referees for their valuable time and comments.
\end{Acknowledgements}

\vspace{1em}
\section{Analytic properties of the Dirichlet series (\ref{ADiri})}

The main result of this section is the following proposition.
\begin{prop}\label{real}
The Dirichlet series \emph{(\ref{ADiri})}
is absolutely convergent for $\sigma > 1$. It has a meromorphic continuation in the half plane $\sigma > 1/2$, and the only possible (simple) pole is at $s = 1$, which appears only if $\chi_1$ and $\chi_2$ are both real characters. 
\end{prop}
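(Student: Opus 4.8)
The plan is to realise the Dirichlet series (\ref{ADiri}) as an Euler product, factor each local factor explicitly through Lemma \ref{isr}, and then recognise the resulting global product as a ratio of two classical $L$-functions whose analytic behaviour is understood. Since $a_j(\cdot,\chi_j)$ is multiplicative and Lemma \ref{isr} gives $\abs{a_j(p^k,\chi_j)} \le k+1$, the coefficients $a_1(n,\chi_1)a_2(n,\chi_2)$ are multiplicative and $\ll n^{\eps}$, so (\ref{ADiri}) converges absolutely for $\sigma>1$ and equals $\prod_p D_p(p^{-s})$ with $D_p(x)=\sum_{k\ge 0}a_1(p^k,\chi_1)a_2(p^k,\chi_2)x^k$. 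Writing $K_j=\QQ(\sqrt{z_j^*})$ and letting $\{\mu_1,\mu_2\}$, $\{\nu_1,\nu_2\}$ denote the local roots of $L_{K_1}(s,\chi_1)$ and $L_{K_2}(s,\chi_2)$ at $p$ (a root being set to $0$ at a ramified prime), the three cases of Lemma \ref{isr} give, after a short computation valid uniformly in the splitting type,
\[
D_p(x)=\frac{1-\chi_{z_1^*}(p)\chi_{z_2^*}(p)\,x^2}{\prod_{i,j\in\{1,2\}}(1-\mu_i\nu_j x)},
\qquad \mu_1\mu_2=\chi_{z_1^*}(p),\quad \nu_1\nu_2=\chi_{z_2^*}(p).
\]
This is the only genuinely computational step; the prime $2$ and the ramified primes dividing $z_1^*z_2^*$ must be checked separately against this formula.

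Multiplying over $p$ then yields, for $\sigma>1$,
\[
\sum_{n=1}^\infty a_1(n,\chi_1)a_2(n,\chi_2)\,n^{-s}
=\frac{\mathcal{L}(s)}{L(2s,\chi_{z_1^*}\chi_{z_2^*})},
\qquad
\mathcal{L}(s)=\prod_p\ \prod_{i,j}(1-\mu_i\nu_j\, p^{-s})^{-1},
\]
because $\prod_p\bigl(1-\chi_{z_1^*}(p)\chi_{z_2^*}(p)p^{-2s}\bigr)=L(2s,\chi_{z_1^*}\chi_{z_2^*})^{-1}$. Here $\mathcal{L}(s)$ is precisely the Rankin--Selberg convolution of the two degree-two $L$-functions $L_{K_j}(s,\chi_j)$. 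Since $z_1\neq z_2$ are square-free, $\chi_{z_1^*}\chi_{z_2^*}=\chi_{z_1^*z_2^*}$ is a nontrivial real character, so $L(2s,\chi_{z_1^*}\chi_{z_2^*})$ is holomorphic and nonvanishing for $\sigma>1/2$; its reciprocal contributes no poles there, while its zeros on the line $\sigma=1/2$ are exactly what forces the restriction to $\sigma>1/2$.

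It remains to analyse $\mathcal{L}(s)$, and the cleanest handle is the Artin formalism: writing $\psi_j=\mathrm{Ind}_{K_j}^{\QQ}\chi_j$, the tensor product of two inductions from the distinct quadratic fields $K_1,K_2$ collapses to a single Hecke $L$-function over the biquadratic compositum, $\mathcal{L}(s)=L_{K_1K_2}(s,\Theta)$ with $\Theta=(\chi_1\circ N_{K_1K_2/K_1})(\chi_2\circ N_{K_1K_2/K_2})$. By Hecke's theorem such a finite-order $L$-function continues to an entire function unless $\Theta$ is principal, in which case it has a simple pole at $s=1$; this already gives the meromorphic continuation and shows every possible pole is simple and at $s=1$. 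To decide when the pole occurs I would compute its order as $\langle\psi_1\otimes\psi_2,\mathbf 1\rangle=\langle\psi_1,\ov{\psi_2}\rangle$, the number of common irreducible constituents of $\psi_1$ and $\ov{\psi_2}$.

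If $\chi_1$ is complex then $\psi_1$ is an irreducible two-dimensional representation, so a shared constituent would force $\psi_1\cong\ov{\psi_2}$; comparing determinants (both are weight-one CM forms, $\det\psi_j=\chi_{z_j^*}$) this would give $\chi_{z_1^*}=\chi_{z_2^*}$, i.e. $K_1=K_2$, a contradiction, and by symmetry the same rules out complex $\chi_2$. Hence a pole can occur only when both $\chi_1,\chi_2$ are real (genus) characters, in which case $\psi_j=\chi_{d_j}\oplus\chi_{d_j'}$ with $d_jd_j'=z_j^*$ and
\[
\mathcal{L}(s)=\prod_{a\mid z_1^*,\ b\mid z_2^*}L(s,\chi_a\chi_b),
\]
which acquires a simple pole at $s=1$ exactly when a genus factor is shared; since $z_1^*\neq z_2^*$ the pairs $\{d_1,d_1'\}$ and $\{d_2,d_2'\}$ cannot coincide, so at most one factor matches and the pole stays simple. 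The main obstacle is the clean identification of $\mathcal{L}(s)$ with a classical $L$-function in the second step: one must either invoke the analytic continuation of Rankin--Selberg convolutions or, preferably for an elementary treatment, carry out the Artin-formalism factorisation over $K_1K_2$ carefully, tracking the degenerate local roots at the ramified primes and at $p=2$ so that the Euler products match exactly.
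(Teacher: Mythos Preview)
Your approach is correct and lands on the same object as the paper, but by a more structural route. Both arguments ultimately compare $\mathcal A(s,\chi_1,\chi_2)$ with the Hecke $L$-function $L_K(s,\chi)$ over the biquadratic field $K=K_1K_2$, and your character $\Theta=(\chi_1\circ N_{K/K_1})(\chi_2\circ N_{K/K_2})$ is exactly the paper's $\chi$ in \eqref{char}. The difference is in how the comparison is made and how the pole is analysed. The paper performs a \emph{soft} matching: it only checks that the first Euler coefficients $\alpha_1(p)$ and $\beta_1(p)$ agree at unramified primes and then invokes the general Lemma~\ref{L-q} to conclude that the ratio $G(s)=\mathcal A(s)/L_K(s,\chi)$ is holomorphic for $\sigma>1/2$, without ever computing $G$. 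You instead carry out the full Rankin--Selberg local calculation and obtain the explicit identification $G(s)=L(2s,\chi_{z_1^*}\chi_{z_2^*})^{-1}$ (up to finitely many bad Euler factors); this is sharper and in effect anticipates Lemma~\ref{Afactor}. For the pole criterion, the paper argues elementarily that $\chi$ principal forces $\chi_j^2(\pri)=1$ for every prime ideal $\pri$, by a case analysis on the splitting type of $p$; you use Frobenius reciprocity $\langle\psi_1\otimes\psi_2,\mathbf 1\rangle=\langle\psi_1,\ov{\psi_2}\rangle$ together with $\det\psi_j=\chi_{z_j^*}$ to rule out $\psi_1\cong\ov{\psi_2}$ when $z_1\ne z_2$. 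Your path is conceptually cleaner and yields more information (the explicit denominator $L(2s,\chi_{z_1^*}\chi_{z_2^*})$), at the price of invoking Artin formalism and having to reconcile the naive Rankin--Selberg Euler factors with those of $L_K(s,\Theta)$ at the ramified primes; the paper's path is more elementary and fully self-contained via Lemma~\ref{L-q}, but gives a less explicit correction factor.
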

The first statement is clear, since by Lemma \ref{isr} we have 
\begin{equation}\label{ap}
	|a_1(p^k, \chi_1) a_2(p^k, \chi_2)| \leq (k+1)^2.
\end{equation}

For the remaining parts, we will consider a class group $L$-function on the field $K = \QQ(\sqrt{z_1^*}, \sqrt{z_2^*})$, with a character $\chi$ defined first on prime ideals by
\begin{equation}\label{char}
	\chi(\Pri) = \chi_1(\mathcal{N}_1 \Pri) \cdot \chi_2(\mathcal{N}_2 \Pri).
\end{equation}
Here $\mathcal{N}_j$ denotes the ideal norm from $\mathcal{O}_K$ to $\mathcal{O}_{\QQ(\sqrt{z_j^*})}$, and we extend $\chi$ to all non-zero fractional ideals by multiplicativity. Note that the ideal norm preserves principal ideals, so it indeed induces a character on the class group of $K$. 

\vspace{0.2em}
Now we consider the following two Euler products:
\begin{equation*}\label{Euler-a}
	\sum_{n = 1}^{\infty} 
a_1(n, \chi_1) a_2(n, \chi_2) n^{-s}
 = 
 \prod_p A_p(s, \chi_1, \chi_2)
 =
 \prod_{p}  \big( 1 + \sum_{k = 1}^\infty \alpha_k(p, \chi_1, \chi_2)p^{-ks} \big),
\end{equation*}
and
\begin{equation*}\label{Euler-L}
	L_K(s, \chi) 
	=  
	\prod_p B_p(s, \chi_1, \chi_2)
	=
	\prod_p \big( 1 + \sum_{k = 1}^\infty \beta_k(p, \chi_1, \chi_2)p^{-ks} \big).
\end{equation*}
Note that $L_K(s, \chi)$ is a degree $4$ $L$-function with unit local parameters, so we have 
\begin{equation}\label{bp}
	|\beta_k(p, \chi_1, \chi_2)|
	\leq \binom{k+3}{3} \leq (k+1)^3.
\end{equation}

Let $p$ be an unramified prime, namely $p \nmid z_1^*z_2^*$. Then by Lemma \ref{isr} and (\ref{char}), we know that $\alpha_1(p, \chi_1, \chi_2) \neq 0$ if and only if $\beta_1(p, \chi_1, \chi_2) \neq 0$, and they are both equivalent to $\chi_{z_1^*}(p) = \chi_{z_2^*}(p) = 1$. In this case, we write 
\vspace{-0.5em}
$$(p) \mathcal{O}_{\QQ(\sqrt{z_1^*})} = \pri_1 \overline{\pri_1}; \ 
(p) \mathcal{O}_{\QQ(\sqrt{z_2^*})} = \pri_2 \overline{\pri_2}; \  
(p) \mathcal{O}_K = \prod_{j = 1}^4 \Pri_j. $$

\vspace{-0.2em}
\noindent
We may check that
$$\alpha_1(p, \chi_1, \chi_2) = \beta_1(p, \chi_1, \chi_2) =  (\chi_{1}(\pri_1) + \chi_{1}(\pri_1)^{-1}) \cdot (\chi_{2}(\pri_2) + \chi_{2}(\pri_2)^{-1}).$$
Hence by (\ref{ap}), we can apply Lemma \ref{L-q} (in Appendix \ref{app}) to obtain a holomorphic function $G(s, \chi_1, \chi_2)$ defined in the half plane $\sigma > 1/2$, such that
\begin{equation}\label{L-G}
	\sum_{n = 1}^{\infty} 
a_1(n, \chi_1) a_2(n, \chi_2) n^{-s} = L_K (s, \chi) \cdot G(s, \chi_1, \chi_2),
\end{equation}
for $\sigma > 1$. 

Recall that the class group $L$-function $L_K(s, \chi)$ has a meromorphic continuation in the whole complex plane; it has a pole at $s = 1$ if and only if the character $\chi$ is principal. Hence our Dirichlet series (\ref{ADiri}) has a meromorphic continuation in the half plane $\sigma > 1/2$, and we may conclude the proof by showing the following lemma. 

\begin{lemma}
	Suppose that $\chi$ is a principal character, then $\chi_1$ and $\chi_2$ are both real characters. 
\end{lemma}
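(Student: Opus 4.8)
The plan is to note that a character of a class group is real precisely when its square is trivial, and to prove $\chi_2^2 = 1$; the statement for $\chi_1$ then follows by interchanging the indices $1$ and $2$ throughout. Since every ideal class of $\QQ(\sqrt{z_2^*})$ contains a degree-one prime ideal---in fact infinitely many, lying above rational primes $p \nmid 2 z_1^* z_2^*$---it is enough to show $\chi_2(\pri_2)^2 = 1$ for each such prime $\pri_2$. Fix one and set $p = \mathcal{N}\pri_2$, so that $\chi_{z_2^*}(p) = 1$. Write $\tau$ for the nontrivial element of $\mathrm{Gal}\big(K/\QQ(\sqrt{z_1^*})\big)$; it fixes $\QQ(\sqrt{z_1^*})$ and restricts to complex conjugation on $\QQ(\sqrt{z_2^*})$.

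I will use two elementary facts. Because $\mathcal{N}_1$ is the relative norm for $K/\QQ(\sqrt{z_1^*})$, it is invariant under $\tau$, so $\mathcal{N}_1(\tau\Pri) = \mathcal{N}_1(\Pri)$ for every ideal $\Pri$ of $\mathcal{O}_K$; on the other hand $\mathcal{N}_2(\tau\Pri) = \ov{\mathcal{N}_2(\Pri)}$, since $\tau$ induces complex conjugation on $\QQ(\sqrt{z_2^*})$. Moreover $\pri_2 \ov{\pri_2} = (p)$ is principal, so $\chi_2(\ov{\pri_2}) = \chi_2(\pri_2)^{-1}$. Suppose first that $\chi_{z_1^*}(p) = 1$. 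Then $p$ splits completely in $K$; choosing a prime $\Pri$ of $K$ above $\pri_2$ we have $\mathcal{N}_2(\Pri) = \pri_2$ and $\mathcal{N}_2(\tau\Pri) = \ov{\pri_2}$, while the $\mathcal{N}_1$-values agree. Using $\chi(\Pri) = \chi(\tau\Pri) = 1$ and dividing,
\[
1 = \frac{\chi(\Pri)}{\chi(\tau\Pri)} = \frac{\chi_2(\pri_2)}{\chi_2(\ov{\pri_2})} = \chi_2(\pri_2)^2 .
\]

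Suppose instead that $\chi_{z_1^*}(p) = -1$. Then $p$ is inert in $\QQ(\sqrt{z_1^*})$ and $\pri_2$ is inert in $K/\QQ(\sqrt{z_2^*})$, so $\Pri := \pri_2 \mathcal{O}_K$ is a prime ideal of $K$ with $\mathcal{N}_2(\Pri) = \pri_2^2$. Its norm to $\QQ(\sqrt{z_1^*})$ is the unique ideal there of norm $p^2$, namely the principal ideal $(p)$; hence $\chi_1(\mathcal{N}_1\Pri) = 1$, and
\[
1 = \chi(\Pri) = \chi_1(\mathcal{N}_1\Pri)\,\chi_2(\mathcal{N}_2\Pri) = \chi_2(\pri_2)^2 .
\]
In both cases $\chi_2(\pri_2)^2 = 1$, and since these primes exhaust the ideal classes of $\QQ(\sqrt{z_2^*})$ we conclude $\chi_2^2 = 1$, as desired.

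The delicate point is the inert case. Conjugating $\chi$ by $\mathrm{Gal}(K/\QQ)$---the first thing one tries---only delivers $\chi_2^2 \circ \mathcal{N}_2 = 1$, that is, $\chi_2^2 = 1$ on the image of the norm $\mathcal{N}_2 \colon \mathcal{C}_K \to \mathcal{C}_2$. When $K/\QQ(\sqrt{z_2^*})$ happens to be unramified this image has index two, a genus-theoretic obstruction, and a priori $\chi_2$ might have order four. What saves the argument is to evaluate $\chi$ on the inert prime $\pri_2 \mathcal{O}_K$: its norm to the \emph{other} quadratic field $\QQ(\sqrt{z_1^*})$ is principal, so the $\chi_1$-factor disappears and $\chi_2(\pri_2)^2$ is exposed directly, including on the classes that lie outside the norm image.
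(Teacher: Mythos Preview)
Your proof is correct and follows essentially the same route as the paper's: both arguments exhaust the degree-one primes $\pri_j$ by casing on the splitting type of $p$ in the other quadratic field, and in each case read off $\chi_j(\pri_j)^2=1$ from $\chi(\Pri)=1$ for a suitable prime $\Pri$ of $K$. Your totally split case is phrased via the ratio $\chi(\Pri)/\chi(\tau\Pri)$ rather than the paper's explicit list of four products, and you add a nice closing remark on why the inert case is the crucial one, but the underlying computation is identical.
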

\begin{proof}
We only need to consider the unramified primes here, since every ideal class contains infinite elements. Moreover, for any rational prime $p$ which is inert in both extensions, we know  $\chi_1((p)) = \chi_2((p)) = 1$ automatically. There are only two remaining situations:
\begin{itemize}
	\item For a totally split rational prime $p$, we have  
\vspace{-0.2em}
$$(p) \mathcal{O}_{\QQ(z_1^*)} = \pri_1 \overline{\pri_1}, \ (p) \mathcal{O}_{\QQ(z_1^*)} = \pri_2 \overline{\pri_2} \text{, and } (p) \mathcal{O}_K = \prod_{j = 1}^4 \Pri_j. $$ 

\vspace{-0.3em}
\noindent
Then $\chi(\Pri_j) = 1$ for every $j$ implies that

\vspace{-1.2em}
$$	\chi_1(\pri_1) \chi_2(\pri_2)
	= \chi_1(\pri_1)^{-1} \chi_2(\pri_2)
	= \chi_1(\pri_1) \chi_2(\pri_2)^{-1}
	= \chi_1(\pri_1)^{-1} \chi_2(\pri_2)^{-1}
	= 1. $$

\vspace{0.3em}
\noindent
Hence we have $\chi_1^2(\pri_1) = \chi_2^2(\pri_2) = 1$.
\vspace{0.5em}    
    \item Let $p$ be a rational prime which is split in one quadratic field and is inert in the other. Without loss of generality we assume that $\chi_{z_1^*}(p) = 1$ and $\chi_{z_2^*}(p) = -1$, then
    
\vspace{-0.8em}
$$(p) \mathcal{O}_{\QQ(z_1^*)} = \pri \overline{\pri}, \ (p) \mathcal{O}_{\QQ(z_1^*)} = (p)\text{, and } (p) \mathcal{O}_K = \Pri \overline{\Pri}.$$

\vspace{0.35em}
\noindent Now $\chi(\Pri) = 1$ implies
$\chi_1(\pri^2)\chi_2((p)) = 1$, so $\chi_1^2(\pri) = \chi_1^2(\overline{\pri}) = \chi_2((p)) = 1$. 
\end{itemize}
\end{proof}

Let $\mathcal{A}(s, \chi_1, \chi_2)$ denote the meromorphic continuation of the Dirichlet series (\ref{ADiri}) for $\sigma > 1/2$. It has the following uniform convexity bound.
\begin{lemma}\label{uni-cb}
	Let $\epsilon, \, \delta > 0$ be sufficiently small. Then in the vertical strip $\sigma \in [1/2 + \epsilon, 1]$, we have
\begin{equation*}
	\mathcal{A}(s, \chi_1, \chi_2) \ll \big((1 + |t|) \cdot Z \big)^{2(1 - \sigma) + \delta},
\end{equation*}

\vspace{0.25em}
\noindent
for any $z_1, z_2 \in S_\mathcal{W}(Z), \, z_1 \neq z_2$, and for any $\chi_1 \in \widehat{\mathcal{C}_1}, \, \chi_2 \in \widehat{\mathcal{C}_2}$.
\end{lemma}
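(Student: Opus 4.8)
The plan is to exploit the factorization (\ref{L-G}), namely $\mathcal{A}(s,\chi_1,\chi_2) = L_K(s,\chi)\,G(s,\chi_1,\chi_2)$, which holds for $\sigma > 1$ and, by analytic continuation, throughout the strip $\sigma \in [1/2+\epsilon, 1]$. Since $\mathcal{A}$ is a product of a degree-$4$ class group $L$-function and the auxiliary factor $G$, it suffices to bound each factor separately: $G$ should contribute only a negligible power of $Z$, while $L_K$ supplies the genuine convexity estimate.

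First I would control $G(s,\chi_1,\chi_2)$. By its construction in Lemma \ref{L-q}, $G$ is an absolutely convergent Euler product in $\sigma > 1/2$, holomorphic there, whose local factor at an unramified prime $p$ is $1 + O(p^{-2\sigma})$; this is precisely the cancellation recorded by the identity $\alpha_1(p,\chi_1,\chi_2) = \beta_1(p,\chi_1,\chi_2)$ at split primes. Hence $\sum_p \log(\text{local factor})$ converges uniformly for $\sigma \geq 1/2+\epsilon$, giving an $O(1)$ contribution independent of $Z$. The only large local factors come from the $O(\omega(z_1^* z_2^*))$ ramified primes dividing $z_1^* z_2^*$; each is bounded by an absolute constant via $|a(p^k,\chi)|\leq k+1$ from Lemma \ref{isr}, so their product is $\ll C^{\omega(z_1^* z_2^*)} \ll Z^{o(1)}$. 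Thus $G(s,\chi_1,\chi_2) \ll Z^{\delta/2}$ uniformly in the strip.

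Next comes the heart of the matter: the convexity bound for $L_K(s,\chi)$. Here $\chi$ is an unramified ideal class character of the biquadratic field $K = \QQ(\sqrt{z_1^*},\sqrt{z_2^*})$, so the conductor of $L_K(s,\chi)$ equals $\abs{d_K}$. By the conductor--discriminant formula, $\abs{d_K} = \abs{d_{k_1}}\,\abs{d_{k_2}}\,\abs{d_{k_3}}$, where $k_1 = \QQ(\sqrt{z_1^*})$, $k_2 = \QQ(\sqrt{z_2^*})$, $k_3 = \QQ(\sqrt{z_1^* z_2^*})$; since $\abs{d_{k_1}},\abs{d_{k_2}} \ll Z$ and $\abs{d_{k_3}} \ll Z^2$, we obtain $\abs{d_K} \ll Z^4$, hence an analytic conductor $\mathfrak{q}(s) \asymp \abs{d_K}(1+\abs{t})^4 \ll (Z(1+\abs{t}))^4$. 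Applying the Phragm\'en--Lindel\"of principle to $L_K(s,\chi)$ (or to $(s-1)L_K(s,\chi)$ when $\chi$ is principal, so as to absorb the simple pole at $s=1$), using boundedness on the edge $\sigma = 1+\epsilon$ from the Euler product, the degree-$4$ functional equation, and Stirling's formula for the gamma factors on the reflected edge, yields the standard estimate $L_K(s,\chi) \ll \mathfrak{q}(s)^{(1-\sigma)/2 + \epsilon} \ll (Z(1+\abs{t}))^{2(1-\sigma)+4\epsilon}$. Multiplying by the bound for $G$ and relabelling $\epsilon,\delta$ gives the claim.

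The main obstacle is making this convexity step genuinely uniform: one must track the dependence of the conductor on $Z$ through the discriminant of the \emph{variable} biquadratic field $K$, and verify that the Phragm\'en--Lindel\"of argument—including the gamma-factor growth and the degree-$4$ functional equation—produces exactly the exponent $2(1-\sigma)$ in $(1+\abs{t})$, with no residual dependence on $z_1, z_2$ in the implied constant. A secondary point requiring care is the principal-character case, where the pole of $L_K$ at $s=1$ must be removed before invoking Phragm\'en--Lindel\"of; away from $s=1$—which is the regime relevant when the contour is later shifted in Perron's formula and the pole is separated off as the main term—this causes no difficulty.
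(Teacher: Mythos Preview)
Your approach is essentially the same as the paper's: factorize $\mathcal{A}=L_K\cdot G$ via (\ref{L-G}), bound $G$ by separating the absolutely convergent tail over large unramified primes from the $O(\omega(z_1^*z_2^*))=O(\log Z/\log\log Z)$ exceptional local factors (each uniformly bounded by (\ref{ap}) and (\ref{bp})), and then invoke the degree-$4$ convexity bound for $L_K(s,\chi)$ with $|\Delta_K|\ll Z^4$. The paper packages the $G$-estimate through the decomposition $G=G_\infty\cdot\prod_{p\in V}A_p/B_p$ of Lemma~\ref{L-q}, which also absorbs the finitely many small unramified primes into $V$; your sketch should likewise include these in the finite product to avoid any issue with small-$p$ local factors, but otherwise the arguments coincide.
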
 

\begin{proof}
For any $\chi_1, \chi_2$ and for any prime $p$, by (\ref{ap}) and (\ref{bp}) we know that the quotient
$$ \frac{\, |A_p(s, \chi_1, \chi_2)| \, }{|B_p(s, \chi_1, \chi_2)|} $$ 
is uniformly bounded in the half plane $\sigma \geq 1/2 + \epsilon$.
By Lemma \ref{L-q}, we have a decomposition
\begin{equation*}
	G(s, \chi_1, \chi_2) = G_\infty(s, \chi_1, \chi_2) \cdot \prod_{p \in V} \frac{A_p(s, \chi_1, \chi_2)}{B_p(s, \chi_1, \chi_2)},
\end{equation*}
where $G_\infty$ is absolutely convergent for $\sigma > 1/2$, and $V$ is a finite set of primes, with size less than the number of ramified primes plus an absolute constant\footnote{ By Lemma \ref{L-q}, this constant only depends on $d$ and $n$ (see Appendix \ref{app} for the definitions), which are both fixed here. }. Again by (\ref{ap}) and (\ref{bp}), we know that $G_\infty$ is uniformly bounded for $\sigma \geq 1/2 + \epsilon$.
Note that for any $z_1, z_2 \in S_\mathcal{W}(Z)$, the number of ramified primes is  $O(\frac{\log Z}{\log \log Z})$.
Hence we have uniformly
\begin{equation}\label{G}
	G(s, \chi_1, \chi_2) \ll_\epsilon \exp \big( \frac{\log Z}{\log \log Z} \big),
\end{equation}
in the half plane $\sigma \geq 1/2 + \epsilon$.

The convexity bound of class group $L$-functions shows that
\begin{equation*}
	L_K(s, \chi) 
	\ll
	\big| (1 + |t|)^{\deg(K/\QQ)} \cdot \Delta_{K/\QQ}  \big|^{(1 - \sigma)/2 + \delta}
	\ll
	\big|(1 + |t|)^4 \cdot Z^4 \big|^{(1 - \sigma)/2 + \delta}.
\end{equation*}
Along with (\ref{L-G}) and (\ref{G}), we have uniformly
\begin{equation*}
	\mathcal{A}(s, \chi_1, \chi_2) \ll \big((1 + |t|) \cdot Z \big)^{2(1 - \sigma) + \delta},
\end{equation*}
for $\sigma \in [1/2 + \epsilon, 1]$.
\end{proof}

\vspace{0.5em}
Let $z \equiv 1 \, ( \mod 4) $ be positive and square-free.
Following \cite[Chapter 22.3]{IK}, the real characters on the class group of $\QQ{(\sqrt{z^*})}$ are simply the \emph{genus characters}. For $f, g \in \Delta_0$ and $f g = z^*$, the genus character $\chi_{f, g}$ is first defined on prime ideals by
\begin{eqnarray}
	\chi_{f, g}(\pri) = 
	\begin{cases}
      \chi_{f}(\mathcal{N}\pri), & \text{if $\pri \nmid f$,} \\
      \chi_{g}(\mathcal{N}\pri), & \text{if $\pri \nmid g$.}  
    \end{cases}   
\end{eqnarray}
And we extend $\chi_{f,g}$ to all non-zero fractional ideals by multiplicativity.

Along with (\ref{cgL}), the Kronecker factorization formula \cite[(22.57)]{IK}:
\begin{equation*}
	L_{\QQ(\sqrt{z^*})}(s, \chi_{f, g})
	= L(s, \chi_{f}) L(s, \chi_{g})
\end{equation*} 
implies that
\begin{equation*}
	a(n, \chi_{f,g}) = (\chi_f * \chi_g)(n).
\end{equation*}

\vspace{0.25em}
\noindent
We may expand the product of two of these convolutions.

\begin{lemma}\label{moebius}
	Suppose that $f_1, g_1, f_2, g_2 \in \Delta_0$. Then we have
\begin{equation*}
	(\chi_{f_1} * \chi_{g_1})(n) \cdot (\chi_{f_2} * \chi_{g_2})(n) 
	= \sum_{n = abcde^2} \mu(e) \chi_{f_1}(abe) \chi_{g_1}(cde) \chi_{f_2}(ace) \chi_{g_2}(bde).
\end{equation*}
\end{lemma}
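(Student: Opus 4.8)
The plan is to prove this as an identity of multiplicative arithmetic functions by comparing Dirichlet series. The Kronecker symbols $\chi_{f_1}, \chi_{g_1}, \chi_{f_2}, \chi_{g_2}$ are completely multiplicative, so the left-hand side is a pointwise product of two Dirichlet convolutions, and the right-hand side is a weighted five-fold convolution; in both cases the associated Dirichlet series factors as an Euler product, so both functions are multiplicative and it suffices to match the two sides coefficient-by-coefficient, i.e. to verify equality of the local Euler factors at each prime $p$.

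First I would compute the Dirichlet series of the right-hand side directly. Writing $n = abcde^2$ and collecting the five summation variables, each variable is attached to exactly two of the four characters, while $e$ additionally carries the sign $\mu(e)$ and the extra square: $a$ to $\chi_{f_1}\chi_{f_2}$, $b$ to $\chi_{f_1}\chi_{g_2}$, $c$ to $\chi_{g_1}\chi_{f_2}$, $d$ to $\chi_{g_1}\chi_{g_2}$, and $e$ to $\chi_{f_1}\chi_{g_1}\chi_{f_2}\chi_{g_2}$. Hence, as formal Dirichlet series (or for $\sigma$ large), the right-hand side factors as
\[
\frac{L(s,\chi_{f_1}\chi_{f_2})\,L(s,\chi_{f_1}\chi_{g_2})\,L(s,\chi_{g_1}\chi_{f_2})\,L(s,\chi_{g_1}\chi_{g_2})}{L(2s,\chi_{f_1}\chi_{g_1}\chi_{f_2}\chi_{g_2})},
\]
where the denominator arises precisely from $\sum_e \mu(e)\,\chi_{f_1}\chi_{g_1}\chi_{f_2}\chi_{g_2}(e)\,e^{-2s} = L(2s,\chi_{f_1}\chi_{g_1}\chi_{f_2}\chi_{g_2})^{-1}$.

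It then remains to show the left-hand side has the same Dirichlet series. Since $a(n,\chi_{f,g}) = (\chi_f * \chi_g)(n)$ has local generating function $\big((1-\chi_f(p)p^{-s})(1-\chi_g(p)p^{-s})\big)^{-1}$, I would set $T = p^{-s}$ together with $x_1 = \chi_{f_1}(p)$, $y_1 = \chi_{g_1}(p)$, $x_2 = \chi_{f_2}(p)$, $y_2 = \chi_{g_2}(p)$, so that the local factor of the left-hand side is the coefficientwise product
\[
\sum_{k\ge 0}\Big(\sum_{i=0}^{k} x_1^{i}y_1^{k-i}\Big)\Big(\sum_{j=0}^{k} x_2^{j}y_2^{k-j}\Big)T^k .
\]
The key step is the classical Rankin--Selberg evaluation
\[
\sum_{k\ge 0}\Big(\sum_{i=0}^{k} x_1^{i}y_1^{k-i}\Big)\Big(\sum_{j=0}^{k} x_2^{j}y_2^{k-j}\Big)T^k
= \frac{1 - x_1 y_1 x_2 y_2\, T^2}{(1-x_1x_2 T)(1-x_1y_2 T)(1-y_1x_2 T)(1-y_1y_2 T)} ,
\]
which I would prove by partial fractions: using $\sum_i x_1^i y_1^{k-i} = (x_1^{k+1}-y_1^{k+1})/(x_1-y_1)$ one writes the left side as $\big[(x_1-y_1)(x_2-y_2)\big]^{-1}$ times a combination of four geometric series $r/(1-rT)$ with $r \in \{x_1x_2, x_1y_2, y_1x_2, y_1y_2\}$, and a short computation shows the resulting numerator collapses to $(x_1-y_1)(x_2-y_2)(1 - x_1y_1x_2y_2\,T^2)$ (the $T^1$ and $T^3$ coefficients cancel since the two crossed products both equal $x_1y_1x_2y_2$). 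Substituting $x_1x_2 = \chi_{f_1}\chi_{f_2}(p)$ and so on, this local identity is exactly the local factor of the series displayed above, and multiplying over all $p$ — including ramified primes, where some $\chi(p)$ vanish and the identity degenerates harmlessly — gives equality of the two Dirichlet series, hence of the two arithmetic functions.

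I expect the only genuine obstacle to be the numerator $1 - x_1 y_1 x_2 y_2\,T^2$ in the Rankin--Selberg identity: this quadratic correction is precisely the arithmetic content encoded by the factor $\mu(e)$ and the square $e^2$ in the parametrization $n = abcde^2$, and it is exactly what rules out the naive (and false) guess that the product is a plain four-fold convolution $\chi_{f_1}\chi_{f_2} * \chi_{f_1}\chi_{g_2} * \chi_{g_1}\chi_{f_2} * \chi_{g_1}\chi_{g_2}$.
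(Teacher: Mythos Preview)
Your proof is correct but proceeds quite differently from the paper. The paper gives a short direct combinatorial argument: it writes the left side as $\sum_{rs=n}\sum_{tu=n}\chi_{f_1}(r)\chi_{g_1}(s)\chi_{f_2}(t)\chi_{g_2}(u)$, observes that every quintuple $(a,b,c,d,e)$ on the right determines a quadruple $(r,s,t,u)=(abe,cde,ace,bde)$ with the same character weight, and then counts preimages: for fixed $(r,s,t,u)$ with $m=\gcd(r,s,t,u)$ and each $e\mid m$ there are exactly $\tau(m/e)$ solutions, so the M\"obius-weighted count is $\sum_{e\mid m}\mu(e)\tau(m/e)=1$. No Euler products enter at this stage.

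Your route instead passes to Dirichlet series and invokes the Rankin--Selberg local identity to match Euler factors. This is perfectly valid, and in fact anticipates the very next step in the paper: the factorization $L(s,\chi_{f_1f_2})L(s,\chi_{f_1g_2})L(s,\chi_{g_1f_2})L(s,\chi_{g_1g_2})L(2s,\chi_{z_1^*z_2^*})^{-1}$ that the paper derives \emph{from} this lemma is exactly what you obtain directly. So your argument effectively merges the lemma and its application into a single computation. The trade-off is that the paper's proof is a two-line elementary counting argument that never leaves the integers, whereas yours needs the (classical but not entirely trivial) partial-fraction verification of the $\mathrm{GL}_2\times\mathrm{GL}_2$ local identity; on the other hand, your approach makes the $L$-function structure transparent from the outset. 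One small remark: your partial-fraction sketch tacitly assumes $x_1\neq y_1$ and $x_2\neq y_2$, but the resulting polynomial identity extends to the degenerate cases by continuity (or by a separate check), so this is harmless.
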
 

\begin{proof}
	We may write the left hand side as
	$$
	\sum_{n = rs} \chi_{f_1}(r) \chi_{g_1}(s) \cdot \sum_{n = tu} \chi_{f_2}(t) \chi_{g_2}(u).
	$$
	For fixed $r, s, t, u$, let $m$ be the greatest common divisor of them. Then for each $e | m$, there are $\tau(m/e)$ possible decompositions satisfying $abe = r, cde = s, ace = t, bde = u$.
	Then by M\"{o}bius inversion, we have
	$$
	\sum_{e | m} \mu(e) \tau(m/e) = 1,
	$$
	\vspace{-0.5em}
	
	\noindent which implies that we count each decomposition from the left hand side exactly once.
\end{proof}

We can now prove the following lemma.

\begin{lemma}\label{Afactor}
For $j = 1, 2$, let $\chi_j = \chi_{f_j, g_j}$ be a genus character of the class group of $\QQ(\sqrt{z_j^*}) $. 
Then $\mathcal{A}(s, \chi_1, \chi_2)$ has a pole at $s = 1$, if and only if there exists a fundamental discriminant $d \, | (z_1^*, z_2^*)$, such that
	\begin{equation*}
		\mathcal{A}(s, \chi_{1}, \chi_{2})  = \,
		\frac{\zeta^{(d)}(s) L(s, \chi_{z_1^*}) L(s, \chi_{z_2^*}) L(s, \chi_{z_1^*/d}\chi_{z_2^*/d})}{L(2s, \chi_{z_1^*}\chi_{z_2^*})},
	\end{equation*}
	where $\zeta^{(d)}(s) = \zeta(s) \cdot \prod_{p|d} (1-p^{-s})$.
\end{lemma}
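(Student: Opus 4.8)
The plan is to compute $\mathcal{A}(s, \chi_1, \chi_2)$ explicitly as a ratio of Dirichlet $L$-functions and then read off the pole structure from that ratio. First I would use the Kronecker factorization recalled above, namely $a_j(n, \chi_{f_j, g_j}) = (\chi_{f_j} * \chi_{g_j})(n)$, to rewrite the coefficients of \emph{(\ref{ADiri})} as a product of two convolutions, and then apply Lemma \ref{moebius} to expand that product. Substituting the resulting identity into the Dirichlet series and writing $n = abcde^2$, the sum over $n$ becomes an unconstrained sum over $(a,b,c,d,e) \in \NN^5$ weighted by $(abcde^2)^{-s}$. Since each $\chi_f$ is completely multiplicative, the summand separates variable by variable, so in the region $\sigma > 1$ of absolute convergence the multiple series factors as a product of five single Dirichlet series.

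Collecting the characters attached to each variable (using $\chi_f \chi_g = \chi_{fg}$ and $f_j g_j = z_j^*$), I expect to obtain
\begin{equation*}
\mathcal{A}(s, \chi_1, \chi_2) = \frac{L(s, \chi_{f_1}\chi_{f_2}) \, L(s, \chi_{f_1}\chi_{g_2}) \, L(s, \chi_{g_1}\chi_{f_2}) \, L(s, \chi_{g_1}\chi_{g_2})}{L(2s, \chi_{z_1^*}\chi_{z_2^*})},
\end{equation*}
where the denominator arises from the $e$-sum $\sum_e \mu(e)\chi_{z_1^*}\chi_{z_2^*}(e)\, e^{-2s} = L(2s, \chi_{z_1^*}\chi_{z_2^*})^{-1}$. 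This identity, valid for $\sigma > 1$, agrees with the meromorphic continuation $\mathcal{A}$ on $\sigma > 1/2$.

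From this formula the pole analysis is essentially immediate. The denominator is finite and nonzero at $s = 1$, so a pole there can only come from a numerator factor, and $L(s, \chi)$ contributes a simple pole at $s = 1$ precisely when $\chi$ is principal, i.e.\ when the two fundamental discriminants in its product coincide. The key combinatorial point is that at most one of the four coincidences $f_1 = f_2$, $f_1 = g_2$, $g_1 = f_2$, $g_1 = g_2$ can occur: since $z_j^* = f_j g_j < 0$ we have $f_j \neq g_j$, and since $z_1 \neq z_2$ we have $z_1^* \neq z_2^*$, and checking each of the six pairs of simultaneous coincidences forces one of these two forbidden equalities. Hence the pole at $s = 1$, when present, is simple. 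Using the symmetry $\chi_{f_j, g_j} = \chi_{g_j, f_j}$ I may then relabel so that the coincidence is $f_1 = f_2 =: d$; then $d \mid (z_1^*, z_2^*)$, $g_1 = z_1^*/d$, $g_2 = z_2^*/d$, and since $\chi_{d^2}$ is the principal character modulo $\abs{d}$ we have $L(s, \chi_{d^2}) = \zeta^{(d)}(s)$, which yields exactly the claimed formula.

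The converse direction is read off directly: the displayed expression has a simple pole at $s = 1$ coming from $\zeta(s)$, the remaining $L$-values being finite and nonzero at $s = 1$ by Dirichlet's non-vanishing theorem, since each relevant character is non-principal (using $z_1^* \neq z_2^*$ for the factor $L(s, \chi_{z_1^*/d}\chi_{z_2^*/d})$), and $\prod_{p \mid d}(1 - p^{-1}) \neq 0$. The main obstacle is the bookkeeping in matching the four characters correctly to the four numerator factors and the case analysis establishing that the coincidence is unique, as it is precisely this uniqueness that guarantees simplicity of the pole.
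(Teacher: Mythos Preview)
Your proposal is correct and follows essentially the same route as the paper: apply Lemma~\ref{moebius} to the Kronecker convolution coefficients, factor the resulting five-fold sum into the ratio of four Dirichlet $L$-functions over $L(2s,\chi_{z_1^*}\chi_{z_2^*})$, and then read off the pole from a coincidence among $f_1,g_1,f_2,g_2$, relabeling so that $f_1=f_2=d$. In fact you supply more detail than the paper does, since you verify explicitly that at most one coincidence can occur (ensuring the pole is simple) and you treat the converse direction separately.
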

\begin{proof}
By Lemma \ref{moebius}, we have
\vspace{-0.25em}
    \begin{eqnarray}\label{5convo}
      \nonumber \mathcal{A}(s, \chi_1, \chi_2) \ =  & \displaystyle \sum_{n = 1}^{\infty} \ \sum_{n = abcde^2} \mu(e) \chi_{f_1}(abe) \chi_{g_1}(cde) \chi_{f_2}(ace) \chi_{g_2}(bde) \, n^{-s} &  \\
   = & \displaystyle \sum_{n = 1}^{\infty} \  \chi_{f_1f_2}(a) \, \chi_{f_1g_2}(b) \, \chi_{g_1f_2}(c) \, \chi_{g_1g_2}(d) \, \mu(e) \chi_{z_1^*z_2^*} (e) \, n^{-s} &   \\ 
      \nonumber  = & L(s, \chi_{f_1f_2}) L(s, \chi_{f_1g_2}) L(s, \chi_{g_1f_2}) L(s, \chi_{g_1g_2}) L(2s, \chi_{z_1^*z_2^*})^{-1}. & 
    \end{eqnarray} 
\vspace{-0.75em}

\noindent 
Note that $f_1, f_2, g_1, g_2 \in \Delta_0$, and $z_1, z_2$ are square-free. The function $\mathcal{A}(s, \chi_1, \chi_2)$ has a pole at $s = 1$, if and only if $f_1 = f_2$, or $f_1 = g_2$, or $f_2 = g_1$ or $g_1 = g_2$. Without loss of generality, we may assume that $f_1 = f_2 = d$. In this case, the factorization in (\ref{5convo}) becomes
\begin{equation*}
\frac{\zeta^{(d)}(s) L(s, \chi_{z_1^*}) L(s, \chi_{z_2^*}) L(s, \chi_{z_1^*/d}\chi_{z_2^*/d})}{L(2s, \chi_{z_1^*}\chi_{z_2^*})}.
\end{equation*}
\end{proof}

\vspace{1em}
\section{Proof of Proposition \ref{partialsum}}

Let $\alpha(n)$ be an arithmetic function. We denote its Dirichlet series by 
\vspace{-0.3em}
\begin{equation*}
	\mathcal{D}(s) = \sum_{n = 1}^{\infty} \alpha(n) n^{-s}.
\end{equation*}

\vspace{-0.2em}
\noindent
We have the following version of Perron's formula (\cite[Lemma 1.4.2]{Br}).
\begin{theorem}[Perron's formula, an effective version]\label{Perron}
Let $c > 0$ and $X, T \geq 2$. Suppose that $\mathcal{D}(s)$ is absolutely convergent in the half plane $\sigma \geq c$.
Then we have
    \begin{eqnarray*}
        	\sum_{n \leq X} \alpha(n) 
        	 =  \frac{1}{2\pi i} \int_{(c)_T} \mathcal{D}(s) X^s \frac{ds}{s}  +  O\Big(\frac{X^c}{T} \sum_{n = 1}^{\infty} |\alpha(n)|n^{-c} + A_X\big(1 + \frac{X \log X}{T}\big)\Big), 
    \end{eqnarray*}
    where $A_X = \max_{\frac{3}{4}X \leq n \leq \frac{5}{4}X} |\alpha(n)|$.

\end{theorem}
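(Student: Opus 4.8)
The plan is to derive the formula from the elementary truncated Perron kernel, together with a careful treatment of the terms for which $n$ is close to the cut-off $X$. First I would interchange summation and integration: since $\mathcal{D}(s)$ converges absolutely for $\sigma \geq c$, it converges uniformly on the compact segment $(c)_T$, whence
\begin{equation*}
	\frac{1}{2\pi i} \int_{(c)_T} \mathcal{D}(s) X^s \frac{ds}{s}
	= \sum_{n = 1}^{\infty} \alpha(n) \, I(X/n),
	\qquad
	I(y) := \frac{1}{2\pi i} \int_{c - iT}^{c + iT} y^s \, \frac{ds}{s}.
\end{equation*}
The goal is then to compare the kernel $I(y)$ with the indicator $\mathbf{1}_{y > 1}$, so that the resulting main term matches $\sum_{n \leq X} \alpha(n)$.

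Second I would record the standard pointwise bounds on $I(y)$. For $y \neq 1$ one shifts the contour to $\sigma \to -\infty$ when $y > 1$ (picking up the residue $1$ at $s = 0$) and to $\sigma \to +\infty$ when $y < 1$ (crossing no pole), and estimates the two horizontal segments; this yields
\begin{equation*}
	\big| I(y) - \mathbf{1}_{y > 1} \big| \leq \frac{y^c}{\pi T \, \abs{\log y}} .
\end{equation*}
For $y = 1$ a direct computation gives $I(1) = \tfrac12 + O(1/T)$, so that $\abs{I(1) - 1} \ll 1$.

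Third I would assemble the error $\sum_{n} |\alpha(n)| \, |I(X/n) - \mathbf{1}_{n \leq X}|$, noting that $\mathbf{1}_{X/n > 1}$ and $\mathbf{1}_{n \leq X}$ differ only at $n = X$, by splitting the range of $n$. When $n \notin [\tfrac34 X, \tfrac54 X]$ one has $\abs{\log(X/n)} \gg 1$, so these terms contribute
\begin{equation*}
	\ll \frac{1}{T} \sum_{n = 1}^{\infty} |\alpha(n)| \, (X/n)^c
	= \frac{X^c}{T} \sum_{n = 1}^{\infty} |\alpha(n)| \, n^{-c} .
\end{equation*}
When $\tfrac34 X \leq n \leq \tfrac54 X$ with $n \neq X$, I bound $|\alpha(n)| \leq A_X$, use $(X/n)^c \asymp 1$ and $\abs{\log(X/n)} \asymp \abs{n - X}/X$, and sum the nearly-harmonic series $\sum_{1 \leq \abs{n - X} \leq X/4} X/\abs{n - X} \ll X \log X$ to get a contribution $\ll A_X \cdot \frac{X \log X}{T}$. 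Finally the single index $n = X$ (when $X \in \ZZ$) contributes $\ll A_X$ via the bound $\abs{I(1) - 1} \ll 1$. Adding the three pieces reproduces exactly the stated error term.

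The main obstacle is this third step, specifically the terms with $n$ near $X$: there the crude kernel bound degrades because $\abs{\log(X/n)} \to 0$, so one cannot afford the global estimate $\sum |\alpha(n)| n^{-c}$ and must instead replace $|\alpha(n)|$ by its local maximum $A_X$ and control the nearly-harmonic sum $\sum 1/\abs{n - X}$. This is precisely what forces the factors $A_X$, $X \log X / T$, and the isolated $+1$ coming from the boundary index $n = X$.
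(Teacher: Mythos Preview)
The paper does not actually prove this theorem: it is quoted verbatim from Br\"udern's textbook \cite[Lemma 1.4.2]{Br} and used as a black box. Your proposal supplies the standard derivation (truncated Perron kernel, contour shift for the pointwise bound on $I(y)$, splitting of the error according to the distance $|n-X|$) and is essentially correct.

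One small technical point worth tightening: in your third step you sum $\sum_{1 \leq |n-X| \leq X/4} X/|n-X|$, implicitly assuming every $n \neq X$ in the window satisfies $|n-X| \geq 1$. When $X$ is not an integer there may be one integer $n$ with $0 < |n-X| < 1$, and for that term the bound $y^c/(\pi T|\log y|)$ alone is not enough. The usual fix is to use the sharper kernel estimate
\[
\big| I(y) - \mathbf{1}_{y>1} \big| \leq y^c \min\!\Big(1,\ \frac{1}{\pi T\,|\log y|}\Big)
\]
(proved by replacing the vertical segment by a circular arc through the right, respectively left, half-plane). Then the at most two integers nearest to $X$ are handled by the trivial bound $\ll 1$ and absorbed into the $A_X$ term, exactly as your $n=X$ case. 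With that adjustment your argument is complete and matches the textbook proof the paper cites.
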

Let $\alpha(n) = r(n, z_1) r(n, z_2)$.
By (\ref{ortho}) and Proposition \ref{real}, we know that 
\begin{equation}\label{Ds}
   \mathcal{D}(s) = \sum_{n = 1}^{\infty} r(n, z_1) r(n, z_2) n^{-s}
   = \frac{1}{h_1 h_2} \ \sum_{\substack{\chi_1, \chi_2}} \ \sum_{n = 1}^\infty a_1(n, \chi_1) a_2(n, \chi_2) n^{-s} .
\end{equation}
has a meromorphic continuation for $\sigma > 1/2$. 
For $\delta > 0$ sufficiently small, by (\ref{ortho}) and (\ref{ap}) we have $|\alpha(n)| \ll n^\delta$, and hence $A_X \ll X^\delta$. Let $\epsilon = \delta / 10$, then by Theorem \ref{Perron} we have
\begin{eqnarray}\label{3.2}
\sum_{n \leq X} r(n, z_1) r(n, z_2) 
        	  =  \frac{1}{2\pi i} \int_{(1 + \epsilon)_T} \mathcal{D}(s) X^s \frac{ds}{s}  +  O \Big(\frac{X^{1 + \epsilon}}{T} + X^{\delta} \Big). 
\end{eqnarray}

In the region $\sigma \in [1/2+\epsilon, 1+\epsilon], \, t \in [-T, T]$, we know that
$\mathcal{D}(s) X^s s^{-1}$
has only one pole at $s = 1$. 
So by the residue theorem, the main term of (\ref{3.2}) becomes
\begin{eqnarray}\label{contour}
	X \cdot \underset{s = 1}{\res} \mathcal{D}(s) 
	+ \frac{1}{2 \pi i}
	\left( \int_{\frac{1}{2} + \epsilon + iT}^{1 + \epsilon + i T} 
	- \int_{\frac{1}{2} + \epsilon - iT}^{1 + \epsilon - i T} 
	+ \int_{(\frac{1}{2} + \epsilon)_T}
	\right) \, \mathcal{D}(s) \cdot \frac{X^s}{s} ds.
\end{eqnarray}
For those $s$ on these integral segments, by (\ref{ortho}) and Lemma \ref{uni-cb} we have
\begin{equation*}
	\mathcal{D}(s) \ll \big|(1+|t|)\cdot Z \big|^{\max(0, \, 2(1 - \sigma) + \delta)}.
\end{equation*}
If we assume that $X, T$ sufficiently large and $T \leq \sqrt{X}/2$, then we have
\begin{eqnarray}
  \int_{\frac{1}{2} + \epsilon + iT}^{1 +\epsilon + i T} 
    & \ll & Z \cdot \int_{\frac{1}{2} + \epsilon}^{1 + \epsilon} T^{\max(-1, \, 1 - 2\sigma + \delta)} \cdot X^{\sigma} d \sigma \\
\nonumber   & \ll &  Z \cdot (X^{1/2+\epsilon} + X^{1 + \epsilon} \cdot T^{-1}). 
\end{eqnarray}
The estimate for the segment $[\frac{1}{2} + \epsilon - iT, 1 + \epsilon -iT]$ is similar. For the vertical integral, we have
\begin{eqnarray}
 \int_{(\frac{1}{2} + \epsilon)_T}
    & \ll & Z \cdot \int_{-T}^T (1 + |t|)^{1 - 2\epsilon + \delta} \cdot \frac{X^{1/2 + \epsilon}}{|\frac{1}{2} + \epsilon + it|} dt \\
\nonumber   & \ll & Z \cdot X^{1/2 + \epsilon} \cdot T^{1 - 2\epsilon +\delta}. 
\end{eqnarray}
Now we choose $T = X^{1/4}$, then (\ref{3.2}) becomes
\begin{equation}\label{rDO}
	\sum_{n \leq X} r(n, z_1) r(n, z_2) = X \cdot \underset{s = 1}{\res} \mathcal{D}(s) + O(X^{3/4 + \epsilon} \cdot Z).    
\end{equation}
Since the convexity bound (Lemma \ref{uni-cb}) is uniform for all $z_1, z_2 \in S_\mathcal{W}(Z)$, the implicit constant here is independent to the choice of $z_1$ and $z_2$.

By (\ref{Ds}), Proposition \ref{real} and Lemma \ref{Afactor}, we have
\begin{equation}\label{sumfact}
	\mathcal{D}(s) = \frac{1}{h_1h_2} \sum_{\substack{ d \in \Delta_0 \\ d| (z_1^*, z_2^*)}} \frac{\zeta^{(d)}(s) L(s, \chi_{z_1^*}) L(s, \chi_{z_2^*}) L(s, \chi_{z_1^*/d}\chi_{z_2^*/d})}{L(2s, \chi_{z_1^*}\chi_{z_2^*})} + H(s).
\end{equation}
Here $H(s)$ is the contribution of the pair of characters such that the Dirichlet series (\ref{ADiri}) has no pole at $s = 1$. Recall that the class number formula shows that
\begin{equation*}
	h_j = \frac{\sqrt{4z_j}}{\pi} L(1, \chi_{z_j^*}).
\end{equation*}
Hence we obtain
\begin{equation*}
	X \cdot \underset{s = 1}{\res} \mathcal{D}(s)
	= X \cdot \frac{\pi^2}{\sqrt{z_1^* z_2^*}} \cdot 
    \frac{L(1, \chi_{z_1^*}\chi_{z_2^*})}{L(2, \chi_{z_1^*}\chi_{z_2^*})} \cdot
    \sum_{\substack{d \in \Delta_0 \\ d| (z_1^*, z_2^*)}} \prod_{p|d} \frac{1-p^{-1}}{1-\chi_{z_1^*/d}\chi_{z_2^*/d}(p)p^{-1}}\,.
\end{equation*}
Along with (\ref{rDO}), this concludes the proof of Proposition \ref{partialsum}.

\vspace{1em}
\section{Proof of Proposition \ref{positive}}

According to Proposition \ref{partialsum}, we have
\begin{eqnarray}\label{step1-p}
\nonumber \displaystyle \sum_{n \leq X} \sum_{\substack{p_1 \neq p_2 \\ p_1, \, p_2 \in S_\mathcal{P}(Z) }} (p_1 p_2)^{1/2} r(n, p_1) r(n, p_2)  \ \ll \ X \cdot  \sum_{\substack{p_1 \neq p_2 \\ p_1, \,  p_2 \in S_\mathcal{P}(Z)}}   L(1, \chi_{p_1^*}\chi_{p_2^*}).   
\end{eqnarray}
Then it is sufficient to prove Proposition \ref{positive} by showing the following lemma.

\begin{lemma}
	We have
	\begin{equation*}
       \sum_{\substack{p_1 \neq p_2 \\ p_1, \,  p_2 \in S_\mathcal{P}(Z)}}   L(1, \chi_{p_1^*}\chi_{p_2^*}) \ll Z^2 \log^{-2} Z.
    \end{equation*}
\end{lemma}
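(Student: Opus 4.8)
The plan is to reduce the whole sum to a mean value of real character sums over primes, isolate a main term coming from perfect squares, and extract cancellation in the rest. First I would identify the character. Since $p_j^{*}=-4p_j$, the Kronecker symbols multiply to $\chi_{p_1^{*}}\chi_{p_2^{*}}=\bigl(\tfrac{16p_1p_2}{\cdot}\bigr)=\bigl(\tfrac{p_1p_2}{\cdot}\bigr)$, and as $p_1,p_2\equiv 1\ (\mathrm{mod}\ 4)$ are distinct, $p_1p_2$ is a fundamental discriminant; hence $L(1,\chi_{p_1^{*}}\chi_{p_2^{*}})=L(1,\chi_{p_1p_2})$ is the value at $s=1$ of the real primitive character of conductor $q=p_1p_2\leq Z^2$. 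By partial summation together with the P\'olya--Vinogradov inequality, $L(1,\chi_{p_1p_2})=\sum_{n\leq N}\chi_{p_1p_2}(n)/n+O(\sqrt q\,\log q/N)$, so the total truncation error over the $\asymp(Z/\log Z)^2$ pairs is $\ll Z^3/(N\log Z)$, which is acceptable once $N\gg Z\log Z$; I would take $N=Z^2$, say.

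After swapping the order of summation the task becomes bounding $\sum_{n\leq N}n^{-1}T(n)$ with $T(n)=\sum_{p_1\neq p_2}\chi_{p_1p_2}(n)$. Using $\chi_{p_1p_2}(n)=\bigl(\tfrac{p_1}{n}\bigr)\bigl(\tfrac{p_2}{n}\bigr)=\bigl(\tfrac{n}{p_1}\bigr)\bigl(\tfrac{n}{p_2}\bigr)$ by reciprocity (legitimate since each $p_j\equiv1\ (\mathrm{mod}\ 4)$; the even $n$ and the $2$-adic factor are a routine side issue), I would write $T(n)=\Sigma(n)^2-\#\{p\in S_P(Z):p\nmid n\}$, where $\Sigma(n)=\sum_{p\in S_P(Z)}\bigl(\tfrac{n}{p}\bigr)$. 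The diagonal piece contributes $\ll \#S_P(Z)\cdot\log N\ll Z$. The decisive split is between square and non-square $n$: for a perfect square $n$ one has $\Sigma(n)=\#S_P(Z)+O(\omega(n))$, and these terms produce the main term $(\#S_P(Z))^2\sum_{n=\square}n^{-1}\asymp(Z/\log Z)^2\asymp Z^2\log^{-2}Z$, matching the target exactly.

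It remains to show the non-square terms contribute $\ll Z^2\log^{-2}Z$. Writing $n=cb^2$ with $c>1$ squarefree, $\Sigma(n)$ depends (up to $O(\omega(n))$ terms) only on $c$, so after performing the harmless $b$-sum the problem reduces to bounding $\sum_{1<c\leq N,\ c\ \mathrm{sqfree}}c^{-1}\bigl|\sum_{p\in S_P(Z)}\bigl(\tfrac{c}{p}\bigr)\bigr|^2$, in which $p\mapsto\bigl(\tfrac{c}{p}\bigr)$ is a fixed non-principal real character. I would obtain cancellation in two regimes. For $c\leq(\log Z)^A$ the Siegel--Walfisz theorem gives $\Sigma(c)\ll Z\exp(-c_0\sqrt{\log Z})$, whose square, summed against $c^{-1}$, is negligible. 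For $c>(\log Z)^A$ I would invoke the large sieve for real (quadratic) characters of Heath--Brown: after a dyadic decomposition in $c$ it bounds the block $C<c\leq 2C$ by $\ll(C+Z)(\log CZ)^{O(1)}\#S_P(Z)$, and weighting by $1/c$ the dominant contribution is $\ll Z\cdot\#S_P(Z)\cdot(\log Z)^{O(1)}/(\log Z)^{A}\ll Z^2(\log Z)^{-A+O(1)}$, which is $\ll Z^2\log^{-2}Z$ once $A$ is chosen large.

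The main obstacle is precisely this non-square range, and the subtlety is uniformity in the conductor. Siegel--Walfisz only reaches conductors of size $(\log Z)^A$, whereas the large sieve is wasteful for small conductors — its $S_2=Z$ term, weighted by $1/c$, blows up as $c\to1$ — so the two estimates must be dovetailed, and one must be careful that the large-sieve input carries only a power of $\log$ rather than a factor $(CZ)^{\epsilon}$: a genuine $Z^{\epsilon}$ loss would already destroy the clean exponent $2$ that the downstream Cauchy--Schwarz argument needs. A putative exceptional real zero is controlled (ineffectively, via Siegel's theorem) in the small range and is invisible to the large sieve in the large range, so no effective constant is lost where it matters.
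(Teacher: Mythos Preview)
Your overall architecture matches the paper's: truncate $L(1,\chi_{p_1p_2})$, separate squares from non-squares, reduce to squarefree moduli, and dovetail Siegel--Walfisz for small moduli with a large-sieve estimate for large ones. The gap is exactly where you yourself flag the danger. Heath-Brown's quadratic large sieve (\cite{HB}, Theorem~1) reads
\[
\sump_{c\leq C}\ \Big|\sum_{p\in S_P(Z)}\Big(\frac{c}{p}\Big)\Big|^2 \ \ll_\epsilon\ (CZ)^{\epsilon}(C+Z)\,\#S_P(Z),
\]
with a genuine $(CZ)^\epsilon$, \emph{not} $(\log CZ)^{O(1)}$. With this correct bound, the $Z$--term in your dyadic block $C<c\leq 2C$, weighted by $1/C$ and summed from $C_0=(\log Z)^A$, contributes
\[
\#S_P(Z)\cdot Z^{1+\epsilon}\sum_{C\geq C_0}C^{\epsilon-1}\ \asymp\ \frac{Z}{\log Z}\cdot Z^{1+\epsilon}(\log Z)^{-A(1-\epsilon)}\ =\ Z^{2+\epsilon}(\log Z)^{-A(1-\epsilon)-1},
\]
and no choice of $A$ makes $Z^{\epsilon}\ll(\log Z)^{O(1)}$. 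So the two-range split \{Siegel--Walfisz\,/\,Heath--Brown\} does not close, precisely in the window $(\log Z)^A<c\leq Z^{\delta}$.

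The paper handles this by a \emph{three}-range split $S_1\cup S_2\cup S_3$: Siegel--Walfisz on $S_1=\{N\leq(\log Z)^B\}$, Heath--Brown on $S_3=\{Z^{\delta}<N\}$ (where the $\epsilon$-loss is absorbed by the $Z^{-\delta}$ saving), and, crucially, Elliott's estimate \cite[Lemma~10]{El} (equivalently \cite[(6)]{HB}) on the middle range $S_2=\{(\log Z)^B<N\leq Z^{\delta}\}$. Elliott's bound
\[
\sum_{\substack{p_1\neq p_2\\ p_1,p_2\in S_P(Z)}}\Big|\sump_{n\leq N}\Big(\frac{n}{p_1p_2}\Big)\Big|^2 \ \ll\ (Z^2+N^2\log N)\,N
\]
is weaker in the $N$-aspect than Heath--Brown but carries \emph{no} $Z^{\epsilon}$; after Cauchy--Schwarz it yields $\sum_{p_1\neq p_2}\big|\sum_n^*\chi_{p_1p_2}(n)\big|\ll N^{1/2}Z^2/\log Z$ for $N\leq Z^{\delta}$, and then $S_2\ll Z^2(\log Z)^{-1-B/2}$. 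Your proposal is therefore missing one ingredient: a middle-range device that avoids the $Z^{\epsilon}$ loss. Either insert Elliott's lemma as the paper does, or find an alternative $\epsilon$-free input for $(\log Z)^A<c\leq Z^{\delta}$; without it the argument does not give $Z^2\log^{-2}Z$.
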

	
\begin{proof}

Let $\chi$ be a primitive character modulo $q > 1$. Then by the approximate functional equation for Dirichlet $L$-functions \cite[Corollary (a)]{Ra}, we have
\begin{eqnarray*}
	L(1, \chi) = \sum_{n \leq Y} n^{-1} \chi(n) + O \Big( q^{1/2}Y^{-1} \log(q+2) \Big).
\end{eqnarray*} 
We choose $Y = Z \log Z$ and sum over $p_1, p_2$ to obtain
\begin{eqnarray*}\label{L-avg-p}
	\sum_{\substack{p_1 \neq p_2 \\ p_1, \, p_2 \in S_\mathcal{P}(Z) }}   L(1, \chi_{p_1^*}\chi_{p_2^*})	\ = \  
	\sum_{\substack{n \leq Z \log Z \\[+1pt] n \text{ odd}  }} \ n^{-1}  \sum_{p_1, \, p_2} \chi_{p_1p_2}(n) \, + \,  O ( Z^2 \log^{-2} Z ).
\end{eqnarray*}
Recall $p^* = -4p$, so we only need to sum over odd $n$ and we have $\chi_{p_1^*} \chi_{p_2^*}(n)$ = $\chi_{p_1 p_2}(n)$. 

For the sum over squares, we have
\begin{equation*}
	\sum_{n = \square} \ n^{-1} \sum_{p_1, \, p_2}  \chi_{p_1p_2}(n)
	\ll Z^2 \log^{-2} Z. 
\end{equation*}
For the remaining sum, note that
\begin{equation*}
	\sum_{\substack{n \leq Z \log Z \\ n \text{ odd}, \, n \neq \square}} \ n^{-1} \, 
 \sum_{\substack{p_1 \neq p_2 \\ p_1, \, p_2 \in S_\mathcal{P}(Z) }} \chi_{p_1p_2}(n) \,
 =
 \sum_{\substack{n \leq Z \log Z \\ n \text{ odd}, \, n \neq \square}} \ n^{-1} \, 
 \Big| \sum_{\substack{p \in S_\mathcal{P}(Z)}} \chi_{p}(n) \Big|^2 + O(Z). 
\end{equation*}
Hence it is sufficient to show that
\begin{equation*}
	 \sum_{\substack{n \leq Z \log Z \\ n \text{ odd}, \, n \neq \square}} \ n^{-1} \, 
 \Big| \sum_{\substack{p \in S_\mathcal{P}(Z)}} \chi_{p}(n) \Big|^2 
 \ll Z^2 \log^{-2} Z. 
\end{equation*}

We denote
\begin{equation*}
	A(n,Z) = n^{-1} \, 
 \Big| \sum_{\substack{p \in S_\mathcal{P}(Z)}} \chi_{p}(n) \Big|^2.
\end{equation*}
For a positive odd non-square integer n, we decompose $n = n_1 n_2^2$ with $n_1$ square-free. Observe that
\begin{eqnarray*}
   A(n, Z) \,
   \leq  \,
   n_1^{-1}n_2^{-2} \
   \Big| \sum_{\substack{p \in S_\mathcal{P}(Z)}} \chi_{p}(n_1) + O\big(\tau(n_2)\big) \Big|^2.
\end{eqnarray*}
Then we have
\begin{eqnarray*}
   \sum_{\substack{n \leq Z \log Z \\ n \text{ odd}, \, n \neq \square}} A(n,Z) \
   = \
   \sum_{\substack{n_2^2 \leq Z \log Z \\ n_2 \text{ odd}}} \ n_2^{-2} \
   \Big( \,
   \sump_{n_1 \leq n_2^{-2} Z \log Z} \, A(n_1,Z) + O(Z^{1+\epsilon}) \Big),
\end{eqnarray*}
where $\sum^*$ henceforth indicates restriction to positive odd square-free integers. Now we may conclude the proof by showing that
\begin{equation}\label{cs-p}
	 \sump_{\substack{n \leq Z \log Z}}  \ A(n,Z) 
 \ll Z^2 \log^{-2} Z. 
\end{equation}

\vspace{0.2em}
According to Heath-Brown's quadratic large sieve \cite[Theorem 1]{HB}, we have
\begin{equation*}
	\sump_{\substack{n \leq Z \log Z}} \ \Big| \sum_{p \in S_\mathcal{P}(Z)} \chi_p(n) \Big|^2 \ll Z^{2+\epsilon}.
\end{equation*}
Then by partial summation, we obtain
\begin{eqnarray*}
	\sump_{\substack{n \leq Z \log Z}} A(n, Z)
	= 
	\sum_{N < Z \log Z} \ N^{-1}(N+1)^{-1} \cdot \sump_{\substack{n \leq N}} \, \Big| \sum_{p \in S_\mathcal{P}(Z)} \chi_p(n) \Big|^2
	+ O(Z^{1 + \epsilon}).
\end{eqnarray*}
Now we separate the $N$-sum into the following three sums:
\begin{equation*}
	S_1 = \sum_{N \leq \log^B Z}, 
	\ \
	S_2 = \sum_{\log^B Z < N \leq Z^\delta},
	\ \
	S_3 = \sum_{Z^\delta < N < Z \log Z}.
\end{equation*}
The parameters $B$ and $\delta$ will be determined later. We will obtain the required upper bound of $S_1$ by applying the Siegel-Walfisz theorem, and of $S_2$ by using Heath-Brown's quadratic large sieve once more. For the intermediate sum $S_2$, we will apply a lemma by Elliott.

\vspace{0.5em}
In fact, we have
\begin{eqnarray*}
	\sum_{p \in S_\mathcal{P}(Z)} \chi_p(n) 
	= 
	\sum_{\substack{d \, (\mod 4n) \\ d \equiv 1 \,(\mod 4)}} \chi_d(4n) \cdot \pi(Z; 4n,d),
\end{eqnarray*}
where
\begin{eqnarray*}
	\pi(Z; 4n, d) : = \# \{ p \leq Z: p  \equiv d \,(\mod 4n)\}.
\end{eqnarray*}
Since $n$ is odd, by the Chinese remainder theorem we know
\begin{eqnarray*}
\sum_{\substack{d \, (\mod 4n) \\ d \equiv 1 \,(\mod 4)}} \chi_d(4n) 
=
	\sum_{\substack{d \, (\mod 4n) \\ d \equiv 1 \,(\mod 4)}} \chi_d(n)
	= \sum_{\substack{d' \, (\mod n)}} \left( \frac{d'}{n} \right).
\end{eqnarray*}
For a positive odd square-free integer $n$, note that $\left( \frac{\cdot}{n} \right)$ is a non-principal Dirichlet character modulo $n$.
Then by the orthogonality relations, we have
\begin{eqnarray*}
	\sum_{\substack{d \, (\mod 4n) \\ d \equiv 1 \,(\mod 4)}} \chi_d(4n)
	= 0.
\end{eqnarray*}
Now choose $A > 2$. By \cite[Corollary 5.29]{IK}, we know that\footnote{ When $n \leq \log(Z)^{A+1}$, it is the classsical Siegel-Walfisz theorem. For larger $n$, this display is simply trivial.}
\begin{eqnarray*}
	\pi(Z; n,d) = \varphi(4n)^{-1} Z + O(Z \log^{-A}Z),
\end{eqnarray*}
where the implicit constant only depends on $A$. Then we have
\begin{equation*}
	\sum_{p \in S_\mathcal{P}(Z)} \chi_p(n) = \sum_{\substack{d \, (\mod 4n) \\ d \equiv 1 \,(\mod 4)}} \chi_d(4n) \cdot \pi(Z; 4n,d)
	  \ll \ n \cdot Z \log^{-A} Z.
\end{equation*}
It follows that
\begin{equation}\label{S1}
	S_1 \ll \sum_{N \leq \log^B Z} N Z^2 \log^{-2A}Z
	\ll 
	 Z^2 \log^{2B-2A}Z.
\end{equation}

After expanding the square and exchanging sums, we have
\begin{equation*}\label{sq-nsq}
	\sump_{\substack{n \leq N}} \Big| \sum_{p \in S_\mathcal{P}(Z)} \chi_p(n) \Big|^2 \,
	= \
	\Big| \sum_{\substack{p_1 \neq p_2 \\ p_1, \, p_2 \in S_\mathcal{P}(Z) }} \ \sump_{\substack{n \leq N}}  \chi_{p_1p_2}(n) \Big|
	+ O(N Z/\log Z).
\end{equation*}
An estimate by Elliott \cite[Lemma 10]{El} (or more precisely, \cite[(6)]{HB}) shows that\footnote{ Elliott's estimate is much weaker than Heath-Brown's quadratic large sieve in the $N$-aspect, but it wins an extra $Z^\epsilon$. This is exactly what we need for estimating $S_2$.}
\begin{equation*}
	\sum_{\substack{p_1 \neq p_2 \\ p_1, \, p_2 \in S_\mathcal{P}(Z) }} \Big|\sump_{\substack{n \leq N}}  \chi_{p_1p_2}(n) \Big|^2
	=
	\sum_{\substack{p_1 \neq p_2 \\ p_1, \, p_2 \in S_\mathcal{P}(Z) }} \Big|\sump_{\substack{n \leq N}} \left( \frac{n}{p_1p_2} \right) \Big|^2
	\ll (Z^2 + N^2 \log N) N.
\end{equation*}
The first equality follows from the quadratic reciprocity, since $p_1 p_2 \equiv 1 \, (\mod 4)$.
Let $N \ll Z^\delta$, where $\delta > 0$ is sufficiently small. Then by the Cauchy-Schwarz inequality, we have
\begin{equation*}
	\sum_{\substack{p_1 \neq p_2 \\ p_1, \, p_2 \in S_\mathcal{P}(Z) }} \ \sump_{\substack{n \leq N}}  \chi_{p_1p_2}(n)
	\ll N^{1/2} Z^2 \log^{-1} Z.
\end{equation*}
Hence we obtain
\begin{equation}\label{S2}
   S_2 
   \ll 
   \sum_{\log^B Z < N \leq Z^\delta} \, N^{-3/2} Z^2 \log^{-1} Z 
   \ll Z^2 \log^{-B/2 - 1}Z.
\end{equation}

Again by \cite[Theorem 1]{HB}, we have 
\begin{equation}\label{S3}
	S_3 
	\ll 
	\sum_{Z^\delta < N < Z \log Z}
	N^{-2} Z^{2 + \epsilon}
	\ll Z^{2 + \epsilon - \delta}.
\end{equation}

Now we choose $\epsilon > 0$ sufficiently small, $\delta = 2\epsilon$, $B = 2$, and $A = 3$. Then by (\ref{S1}), (\ref{S2}) and (\ref{S3}), we finally obtain (\ref{cs-p}).

\end{proof}

\vspace{1em}
\appendix

\section{Quotient of a Dirichlet series by an $L$-function}\label{app}

Let $a(n)$ be a multiplicative arithmetic function. We denote $a_k(p) = a(p^k)$. Suppose that for any prime $p$, we have 
\begin{equation}\label{bound-a}
   a_k(p) \ll k^c,
\end{equation}
where $c$ is a positive constant.
Then the Euler product
$$A(s) = \prod_{p} A_p(s) = \prod_p \big( 1 + \sum_{k = 1}^\infty a_k(p) p^{-ks} \big)
$$
is absolutely convergent for $\sigma > 1$.

Let $B(s)$ be an $L$-function of degree $d$ with unit local parameters $\lambda_k(p)$. Then the Euler product
\begin{eqnarray}\label{Lsf}
	B(s) 
	\ = \  
	\prod_p B_p(s) 
	& = &
	\prod_p \ (1 - \lambda_1(p)p^{-s})^{-1} \cdots (1 - \lambda_d(p)p^{-s})^{-1} \\
\nonumber	& = & 
	\prod_p \big( 1 + \sum_{k = 1}^{\infty} b_k(p) p^{-ks} \big)
\end{eqnarray}
is absolutely convergent for $\sigma > 1$. Note that for any prime $p$, we have uniformly
\begin{equation}\label{bound-b}
	|b_k(p)| \leq \binom{k+d-1}{d-1} \leq (k+1)^d.
\end{equation}

We will prove the following lemma.
\begin{lemma}\label{L-q}
   Let $A(s)$ and $B(s)$ as above; let $U$ be a finite set of primes and let $n \geq 2$. Suppose that for all primes $p \notin U$, we have $a_k(p) = b_k(p)$ for any $1 \leq k < n$.
   Then there exists a function $G(s)$, such that 
  $$A(s) = B(s) \cdot G(s),$$
  for $\sigma > 1$. Moreover, the function $G(s)$ is holomorphic in the half  plane $\sigma > 1/n$, and it has an Euler product with decomposition
  $$G(s) = G_\infty(s) \cdot \prod_{p \in V} \frac{A_p(s)}{B_p(s)}.$$
  Here $G_\infty(s)$ is an absolutely convergent infinite product for $\sigma > 1/n$, and $V$ is a finite set of primes with $\# V \leq \#U + C$, where $C$ is a constant only depending on $n$ and $d$.

\end{lemma}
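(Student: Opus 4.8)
The plan is to build $G$ as the Euler product of the local quotients $A_p(s)/B_p(s)$ and to show that the hypothesis $a_k(p)=b_k(p)$ for $1\le k<n$ forces each of these factors to equal $1+O(p^{-n\sigma})$, which is exactly summable for $\sigma>1/n$. First I would set $G_p(s)=A_p(s)/B_p(s)$. Since $B(s)$ has unit local parameters, $B_p(s)^{-1}=\prod_{j=1}^{d}(1-\lambda_j(p)p^{-s})$ is a polynomial in $X:=p^{-s}$ of degree $\le d$ whose zeros all lie on $\sigma=0$; together with the fact that $A_p(s)=1+\sum_{k\ge 1}a_k(p)X^k$ converges for $|X|<1$, this shows each $G_p$ is holomorphic for $\sigma>0$, in particular on $\{\sigma>1/n\}$. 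I then expand $G_p(s)=1+\sum_{k\ge 1}g_k(p)p^{-ks}$.

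Next, comparing coefficients in the identity $A_p=B_pG_p$ gives $a_m=g_m+b_m+\sum_{1\le j<m}b_{m-j}\,g_j$. For $p\notin U$ we have $a_m=b_m$ for all $1\le m<n$, so a short induction on $m$ yields $g_m(p)=0$ for every $1\le m\le n-1$; thus $G_p(s)=1+\sum_{k\ge n}g_k(p)p^{-ks}$. For the size of the surviving coefficients I would use that the coefficients of the degree-$d$ polynomial $B_p^{-1}$ are, up to sign, the elementary symmetric functions of the $\lambda_j(p)$, hence bounded in absolute value by $\binom{d}{i}$, with $\sum_i\binom{d}{i}=2^d$; convolving with $A_p$ and invoking \eqref{bound-a} gives $|g_k(p)|\le 2^d\max_{0\le i\le d}|a_{k-i}(p)|\ll_{c,d}k^c$, uniformly in $p$.

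With these bounds the tail estimate is routine: for $\sigma>1/n$ and $p\notin U$,
$$|G_p(s)-1|\le\sum_{k\ge n}|g_k(p)|\,p^{-k\sigma}\ll_{c,d}p^{-n\sigma}\sum_{m\ge 0}(n+m)^c 2^{-m/n}\ll_{c,d,n}p^{-n\sigma},$$
the last series converging because $\sigma>1/n$ forces $p^{-\sigma}\le 2^{-1/n}<1$. Since $n\sigma>1$, the estimate $\sum_{p\notin U}\sup_K|G_p-1|<\infty$ holds on every compact $K\subset\{\sigma>1/n\}$, so $\prod_{p\notin U}G_p$ converges locally uniformly and defines a holomorphic function there. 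The abscissa $1/n$ appears precisely because the leading surviving term of $G_p-1$ is of order $p^{-n\sigma}$, which is the whole point of the vanishing of $g_1,\dots,g_{n-1}$.

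Finally I would pass from $U$ to the exceptional set $V$. The only obstruction to $\prod_{p\notin U}G_p$ being a genuine nonvanishing, absolutely convergent product is that $G_p$ can vanish where $A_p$ does; but the same computation gives $|A_p(s)-1|\ll_{c}p^{-1/n}$ on $\sigma>1/n$, so $A_p$, and hence $G_p$, is zero-free there once $p$ exceeds a bound depending only on $n$ and $d$ (and the exponent $c$). Setting $V=U\cup\{\text{these finitely many small primes}\}$ gives $\#V\le\#U+C$ with $C$ so controlled, makes $G_\infty:=\prod_{p\notin V}G_p$ nonvanishing and absolutely convergent for $\sigma>1/n$, and yields $G:=G_\infty\cdot\prod_{p\in V}A_p/B_p$ holomorphic on $\sigma>1/n$. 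For $\sigma>1$ all of $\prod_pA_p$, $\prod_pB_p$, $\prod_pG_p$ converge absolutely, whence $B\cdot G=\prod_p B_p\,(A_p/B_p)=\prod_p A_p=A$, as required. I expect the main obstacle to be the uniform-in-$p$ coefficient bound $|g_k(p)|\ll_{c,d}k^c$ together with the attendant identification of $\sigma=1/n$ as the correct abscissa of convergence; controlling $\#V$ is then only bookkeeping.
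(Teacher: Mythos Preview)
Your argument is correct and follows the same overall strategy as the paper: write $G_p=A_p/B_p$, verify that $g_1(p)=\cdots=g_{n-1}(p)=0$ for $p\notin U$ via the coefficient recursion, bound the tail, and conclude absolute convergence of the infinite product for $\sigma>1/n$.

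The one substantive difference is the bound on $g_k(p)$. The paper runs the recursion $g_k=a_k-b_k-\sum_{m<k}b_m g_{k-m}$ directly and obtains only the exponential estimate $|g_k(p)|\ll T^k$ with $T=T(d)$; this forces it to throw the primes $p\le (2T)^n$ into $V$ so that $Tp^{-\sigma}<1/2$ and the geometric tail converges. You instead exploit that $B_p(s)^{-1}=\prod_{j=1}^d(1-\lambda_j(p)p^{-s})$ is a polynomial of degree $\le d$ with coefficients bounded by $\binom{d}{i}$, whence $g_k$ is a convolution of at most $d+1$ terms and $|g_k(p)|\ll_{c,d}k^c$. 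This polynomial bound is genuinely sharper and makes the tail $\sum_{k\ge n}|g_k(p)|p^{-k\sigma}\ll p^{-n\sigma}$ hold for \emph{every} prime, so no small primes need to be excised for convergence; you only enlarge $U$ to $V$ to secure zero-freeness of $G_\infty$. One small cost: your threshold for $A_p\ne 0$ depends on $c$ (and the implicit constant in $a_k\ll k^c$), so your $C$ is not literally a function of $n,d$ alone as the lemma asserts, whereas the paper's choice $V=U\cup\{p\le(2T)^n\}$ achieves this. If you care about matching the statement exactly, you can simply adopt the paper's $V$ while keeping your sharper coefficient bound; nothing else changes.
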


\begin{proof} 
	We define $g_1(p) =  a_1(p) - b_1(p)$; for $k \geq 2$, we recursively define
	$$g_k(p) = a_k(p) - b_k(p) - \sum_{m = 1}^{k - 1} b_m(p) g_{k - m}(p).$$
	Then we define 
	$$G(s) = \prod_{p} G_p(s) = \prod_{p} \big(1 + \sum_{k = 1}^\infty g_k(p)p^{-ks} \big).$$ 
	For all $p \notin U$, we have $g_k(p) = 0$ for $1 \leq k < n$ . By induction, we have $g_k(p) \ll T^k$, where $T$ is a constant only depending on the degree $d$ of the $L$-function. 
	In fact, we can choose any $T > 1$ satisfying
	$$
	\sum_{m = 1}^{\infty} (m+1)^d \cdot T^{-m} < 1.
	$$
		
	For $\sigma > 1/n$, by (\ref{bound-a}) and (\ref{bound-b}) we know that the $p$-factors $A_p(s)$ and $B_p(s)$ are absolutely convergent. Moreover, for any prime $p$, by (\ref{Lsf}) we have $|B_p(s)| \geq \delta > 0$. We denote 
	\begin{equation*}\label{V}
			V = \{p \text{ prime}: p \leq (2T)^{n} \text{ or } p \in U\}.
	\end{equation*}
	Then the finite product
	\begin{equation*}\label{G0}
		\prod_{p \in V} G_p(s) =  \prod_{p \in V}\frac{A_p(s)}{B_p(s)}
	\end{equation*}	
	is holomorphic.
	For $\sigma > 1/n$ and for any prime $p \notin V$, we have
	\vspace{-0.3em}
    $$ \sum_{k = 1}^{\infty}g_k(p)p^{-ks} 
    =  \sum_{k = n}^{\infty}g_k(p)p^{-ks} 
    \ll \sum_{k = n}^{\infty}(T p^{-s})^k
    \ll \big( T p^{-s} \big)^n.$$
	Therefore, the infinite product
	\begin{equation*}\label{G1}
	   G_\infty(s) = \prod_{p \notin V} G_p(s)
	\end{equation*}
    is absolutely convergent for $\sigma > 1/n$, hence the function $G(s)$ is holomorphic for $\sigma > 1/n$. 

    \vspace{0.1em}
    For $\sigma > 1$, we have $A(s) = B(s) \cdot G(s)$, simply by comparing their coefficients.
  
\end{proof}

\vspace{1em}

\end{document}